\theoremstyle{plain}
\newtheorem{theorem}{Theorem}
\newtheorem{corollary}{Corollary}
\newtheorem{lemma}{Lemma}
\theoremstyle{definition}
\newtheorem{definition}{Definition}
\theoremstyle{example}
\theoremstyle{remark}
\numberwithin{equation}{section}
\begin{document}

\title[Combinatorics of RNA Structures with Pseudoknots]
      {Combinatorics of RNA Structures with Pseudoknots}
\author{Emma Y. Jin, Jing Qin and Christian M. Reidys$^{\,\star}$}
\address{Center for Combinatorics, LPMC \\
         Nankai University  \\
         Tianjin 300071\\
         P.R.~China\\
         Phone: *86-22-2350-6800\\
         Fax:   *86-22-2350-9272}
\email{duck@santafe.edu}
\thanks{}
\keywords{RNA secondary structure, pseudoknot, enumeration, generating
function, reflection principle, walks, Weyl-chamber}
\date{April 2007}
\begin{abstract}
In this paper we derive the generating function of RNA structures with
pseudoknots. We enumerate all $k$-noncrossing RNA pseudoknot structures
categorized by their maximal sets of mutually intersecting arcs.
In addition we enumerate pseudoknot structures over circular RNA.
For $3$-noncrossing RNA structures and RNA secondary structures
we present a novel $4$-term recursion formula and a $2$-term recursion,
respectively.
Furthermore we enumerate for arbitrary $k$ all $k$-noncrossing, restricted
RNA structures i.e.~$k$-noncrossing RNA structures without $2$-arcs
i.e.~arcs of the form $(i,i+2)$, for $1\le i\le n-2$.
\end{abstract}
\maketitle
{{\small
}}


\section{Introduction}


In this paper we study the combinatorics of helical structures of
RNA sequences. RNA is described by its primary sequence of nucleotides
{\bf A}, {\bf G}, {\bf U} and {\bf C} together with the Watson-Crick
({\bf A-U}, {\bf G-C}) and ({\bf U-G}) base pairing rules specifying
which pairs of nucleotides can potentially form bonds. Subject to these
single stranded RNA form helical structures.
The function of many RNA sequences depends on their structures. Therefore
it is important to understand RNA structure in the context of studying the
function of biological RNA as well as in the design process of artificial
RNA structures. Since RNA is capable of catalytic activity, for instance
RNA ribozymes can cleave other RNA molecules, it is believed that RNA may
have been instrumental for early evolution, before Proteins emerged.
A particularly well-studied sub-class of RNA structures, consisting of
planar graphs are the RNA secondary structures.
Their combinatorics was pioneered by Waterman {\it et.al.} in a series
of seminal papers
\cite{Penner:93c,Waterman:79a,Waterman:78a,Waterman:94a,Waterman:80}.
RNA secondary structures are coarse grained structures and systematic
prediction of the full three dimensional structures, the tertiary
structures seems at present time to be out of reach.
It was shown in \cite{Waterman:86} that the prediction of secondary
structures can be obtained in polynomial time and their combinatorics,
specifically the existence of recursion relations is the key for all
folding algorithms \cite{Zuker:79b,Schuster:98}. Over the last two
decades a variety of prediction algorithms, based on minimum free
energy \cite{Zuker:79b,Waterman:86,Bauer:96}, kinetic folding
\cite{Tacker:94a} or the partition function \cite{McCaskill:90a}
for RNA secondary structures has been derived.

An increasing number of experimental findings, as well as results from
comparative sequence analysis imply that there exist additional types
of interactions between RNA nucleotides \cite{Westhof:92a}. These bonds
are called pseudoknots and occur in functional RNA like for instance
RNAseP \cite{Loria:96a} as well as ribosomal RNA \cite{Konings:95a}.
RNA pseudoknots are conserved also in the catalytic core of group I
introns. In plant viral RNAs pseudoknots mimic tRNA structure and in
{\it in vitro} RNA evolution \cite{Tuerk:92} experiments have produced
families of RNA structures with pseudoknot motifs, when binding HIV-1
reverse transcriptase. In addition important mechanisms like ribosomal
frame shifting \cite{Chamorro:91a} also involve pseudoknot interactions.
As a result RNA pseudoknot structures have drawn over the last years a
lot of attention \cite{Science:05a}. Several folding algorithms
\cite{Rivas:99a,Uemura:99a,Akutsu:00a,Lyngso:00a}
have been developed which include certain families of pseudoknots.
The prediction problem in general is (although we have not seen formal
proof) believed to be NP-hard. In difference to RNA secondary structures
a recursive enumeration for pseudoknot RNA is believed to be
non-trivial but nevertheless of vital importance for prediction algorithms.
Intuitively if bonds can cross it is much harder to enumerate since
structural elements can now interact and as a result a structure cannot
be straighforwardly decomposed into independent sub-structures.
Little is known with respect to the combinatorics of pseudoknot
RNA structures.
Stadler {\it et al.} \cite{Stadler:99a} suggested a
classification of their knot-types based on a notion of inconsistency
graphs and provided an upper bound for a certain class of
pseudoknots (our 3-noncrossing RNA structures).

In this paper we introduce a novel approach for the enumeration of
RNA structures. Based on new concepts in enumerative combinatorics
\cite{Chen:07a,Gessel:92a} we use a method which has the
potential to
offer insight also into other lattice structure concepts. To be precise
Chen~{\it et.al.} have shown in \cite{Chen:07a} that there is a bijection
between certain types of matchings and walks inside Weyl-chambers.
This bijection is obtained via his construction of oscillating
tableaux i.e.~families of Young diagrams in which any two consecutive
shapes differ by exactly one square. The corresponding
walks can then be enumerated via determinant formulas derived from a
reflection principle due to Gessel and Zeilberger
\cite{Gessel:92a} and
Lindstr\"om \cite{Lindstroem:73a}. The key idea behind the reflection
principle is that walks which hit the wall of a Weyl-chamber can be
reflected. The original (unreflected) and the reflected walk
cancel themselves leaving just the walks that never hit a wall.
Crucial for its applicability are restrictive symmetry assumptions
since the reflected walk has to be of the same type and, more
importantly, the reflection itself can occur at any step.
These symmetries are non-existent in walks corresponding to RNA
structures. However, our derivation of the generating function of RNA
structures is based on these symmetric walks. The key idea is to
introduce the asymmetries of RNA structures into the symmetric walks
using a certain involution idea. We believe that our particular strategy
can be applied for the enumeration of further structure classes.
As a result we have tried to keep this paper self contained.

Our main result is the enumeration of all RNA structures. We classify RNA
structures by their specific crossing types under the assumption that all
base pairs can occur. For arbitrary but fixed $k$ we enumerate all RNA
structures with no $k$-set of mutually intersecting bonds.
In case of $k=2$, our results
reduce to noncrossing structures, i.e.~RNA secondary structures
\cite{Waterman:94a} and Waterman's formula for the number of
RNA secondary structures with exactly $k$ bonds is derived directly.
The case $k=3$ coincides with Stadler's bi-secondary
structure \cite{Stadler:99a}. We obtain from the generating
function a novel $4$-term recursion formula for RNA structures of length
$n$ with no $3$-set of mutually intersecting bonds  and
having $\ell$ isolated vertices. We believe that this recursion is
the key for developing new prediction algorithms for RNA structures.
Also we derive the generating function for circular RNA structures
i.e.~for sequences where the bond between $1$ and $n$ is considered part
of the primary sequence. Finally we enumerate restricted RNA structures,
i.e.~structures in which two interacting nucleotides have at least
distance $3$.

The paper is structured as follows. We will begin by introducing several
important combinatorial concepts needed for our derivations. Young
tableaux, oscillating Young diagrams, RSK algorithm, Weyl-chambers and
the reflection principle. We discuss these concepts, illustrate all key
ideas and give the corresponding proofs in the Appendix. Our derivation is
obtained in three steps. First (Theorem~\ref{T:tableaux}) we show that
each structure (represented as a $k$-noncrossing digraph)
corresponds uniquely to a walk starting and ending at $(k-1,k-2,
\dots,1)$ in $\mathbb{Z}^{k-1}$ and which never touches a wall of the
Weyl-chamber $C_0$.
Secondly we apply the reflection principle (Theorem~\ref{T:reflect}) in
order to count the symmetric walks that remain in the interior of $C_0$.
Thirdly (Theorem~\ref{T:cool1}) we incorporate the
specific properties of RNA into these symmetric walks and compute the
generating function of $k$-noncrossing RNA structures. We show how our
results relate to known formulas of RNA secondary structures for which
we present a two new term recursion formula. For $3$-noncrossing RNA
structures we give a novel $4$-term recursion formula. We finally
generalize our strategy (Theorem~\ref{T:cool2}) and enumerate restricted
RNA structures.

\section{From structures to walks and back}


Let us begin by illustrating the concept of RNA structures. Suppose we are
given the primary sequence
$$
{\bf A}{\bf A}{\bf C}{\bf C}{\bf A}{\bf U}{\bf G}{\bf U}{\bf G}{\bf G}
{\bf U}{\bf A}{\bf C}{\bf U}{\bf U}{\bf G}{\bf A}{\bf U}{\bf G}{\bf G}
{\bf C}{\bf G}{\bf A}{\bf C}  \ .
$$
Structures are combinatorial graphs over the labels of the nucleotides
of the primary sequence. These graphs can be represented in several ways.
In Figure $1$ we represent a particular structure with loop-loop
interactions in two ways:
first we display the structure as a planar graph
and secondly as a diagram, where the bonds are drawn as arcs in the
positive half-plane.
\begin{figure}[ht]
\centerline{%
\epsfig{file=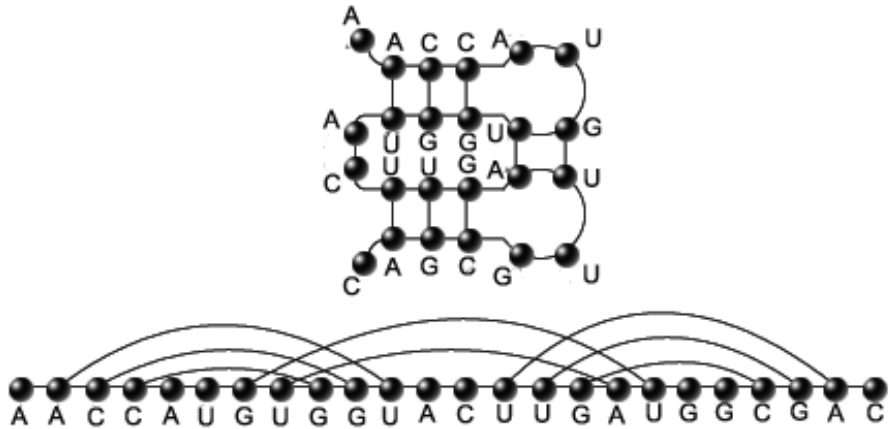,width=0.9\textwidth}\hskip15pt
 }
\caption{\small Two representations of RNA structures, planar graphs (top)
and diagrams (bottom)}
\label{fig:1}
\end{figure}

In the following we will consider structures as diagram representations of
digraphs.
A digraph $D_n$ is a pair of sets $V_{D_n},E_{D_n}$, where $V_{D_n}=
\{1,\dots,n\}$ and $E_{D_n}\subset \{(i,j)\mid 1\le i< j\le n\}$.
$V_{D_n}$ and $E_{D_n}$ are called vertex and arc set, respectively.
A $k$-noncrossing digraph, $G_{k,n}$, is a digraph in which all
vertices have degree $\le 1$ and which does not contain a $k$-set of
arcs that are mutually intersecting, i.e.
\begin{eqnarray}
\not\exists\,
(i_{r_1},j_{r_1}),(i_{r_2},j_{r_2}),\dots,(i_{r_k},j_{r_k});\quad & &
i_{r_1}<i_{r_2}<\dots<i_{r_k}<j_{r_1}<j_{r_2}<\dots<j_{r_k} \ .
\end{eqnarray}
The set of all $k$-noncrossing digraphs $G_{k,n}$ is denoted by
$\mathcal{G}_{n,k}$ and we set ${\sf G}_{n,k}=\vert\mathcal{G}_{n,k}
\vert$.
The (formal) direction of the edges will have procedural convenience when
we map a $k$-noncrossing digraph into an oscillating tableaux
(Theorem~\ref{T:tableaux}).
We will represent digraphs as a diagrams (Figure~\ref{fig:1}) by
representing the vertices as
integers on a line and connecting any two adjacent vertices by an arc in
the upper-half plane. The direction of the arcs is implicit in the
linear ordering of the vertices and accordingly omitted.
\begin{definition}\label{D:rna}
An RNA structure (of pseudoknot type $k-2$), ${S}_{k,n}$, is a digraph
in which all vertices have degree $\le 1$, that does not contain a
$k$-set of mutually intersecting arcs and $1$-arcs, i.e.~arcs of the
form $(i,i+1)$, respectively.
We denote the number of RNA structures by ${\sf S}_k(n)$ and the number
of RNA structures with exactly $\ell$ isolated vertices by ${\sf S}_k(n,
\ell)$, respectively.
We call an RNA structure restricted iff it does not contain any $2$-arcs,
i.e.~an arc of the form $(i,i+2)$.
\end{definition}
\begin{figure}[ht]
\centerline{%
\epsfig{file=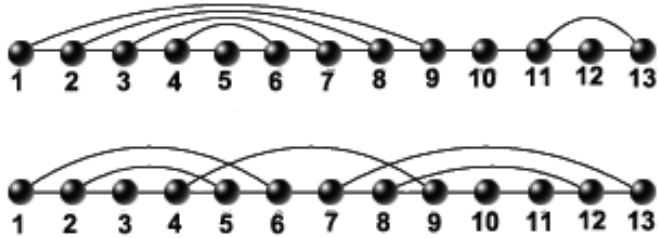,width=0.7\textwidth}\hskip15pt
 }
\caption{\small RNA structures represented as
diagrams, i.e.~arcs over $\{1,\dots,n\}$ in the upper half-plane.
$k$-noncrossing digraphs are precisely those which have no $k$-set
of mutually intersecting arcs. $2$-noncrossing diagrams without $1$-arcs
(top) correspond to secondary structures. $3$-noncrossing diagrams without
$1$-arcs (bottom) represent RNA structures with pseudoknots. }
\label{fig:2}
\end{figure}
We derive the enumeration of RNA structures in three steps.
First we establish a bijection from $k$-noncrossing digraphs into a
certain class of walks.
Secondly we will use the reflection principle in order to count these
walks. Thirdly we enumerate all walks subject to specific conditions
recruiting a certan involution idea.
Let us first discuss two basic concepts needed for our arguments.

{\bf Young tableaux and the RSK algorithm.}
A Young diagram (shape) is a collection of squares arranged in
left-justified
rows with weakly decreasing number of boxes in each row. A Young tableau
is a filling of the squares by numbers which is weakly decreasing
in each row and strictly decreasing in each column. A tableau is called
standard if each entry occurs exactly once. An oscillating tableau is a
sequence $\varnothing=\mu^{0}, \mu^{1},\ldots,\mu^{n}=
\varnothing$ of standard Young diagrams, such that for
$1\le i \le n$, $\mu^{i}$
is obtained from $\mu^{i-1}$ by either adding one square or removing
one square. For instance the sequence
\begin{figure}[ht]
\centerline{%
\epsfig{file=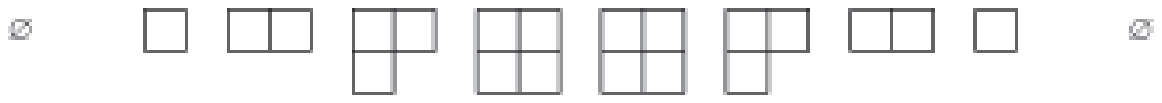,width=0.75\textwidth}\hskip15pt
 }
\caption{1}
\end{figure}

is an oscillating tableaux.

The RSK algorithm is a process of row-inserting elements into a tableau.
Suppose we want to insert $k$ into a standard Young tableau $\lambda$.
Let $\lambda_{i,j}$ denote the element in the $i$th row and $j$th
column of the Young tableau. Let $i$ be the largest integer such that
$\lambda_{1,i-1}\le k$. (If $\lambda_{1,1}>k$, then $i=1$.) If
$\lambda_{1,i}$ does not exist, then simply add $k$ at the end of the
first row. Otherwise, if $\lambda_{1,i}$ exists, then replace
$\lambda_{1,i}$ by $k$. Next insert $\lambda_{1,i}$ into the second
row following the above procedure and continue until an element is
inserted at the end of a row. As a result we obtain a new standard
Young tableau with $k$ included. For instance inserting the number
sequence $5,2,4,1,6,3$ starting with an empty shape yields the following
sequence of standard Young tableaux:
\par\medskip

\par\medskip

\par\medskip

\par\medskip
\begin{figure}[ht]
\centerline{%
\epsfig{file=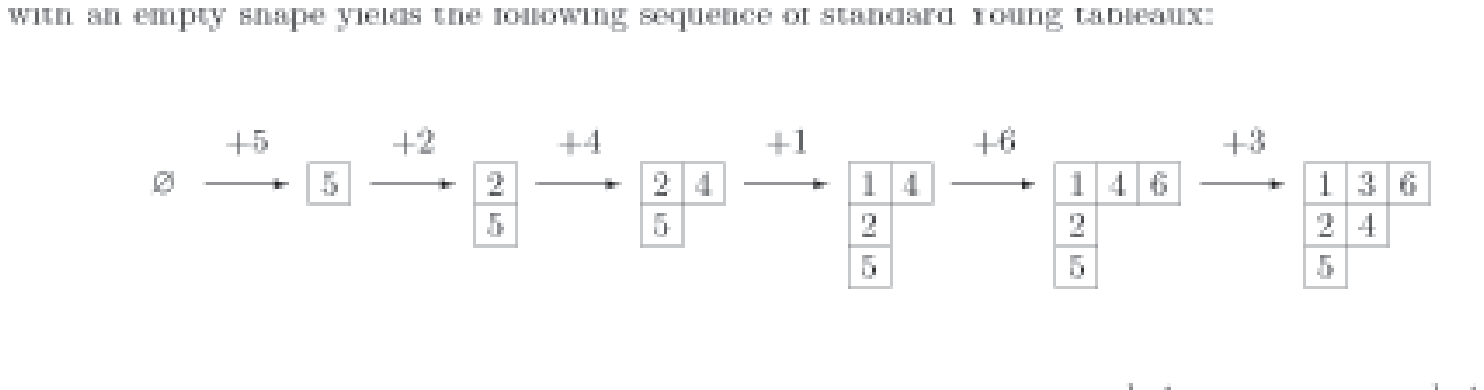,width=0.75\textwidth}\hskip15pt
 }
\caption{} \label{fig:4}
\end{figure}

{\bf Symmetry groups and Weyl-chambers.} We consider the lattice
$\mathbb{Z}^{k-1}$ and walks in $\mathbb{Z}^{k-1}$ having the steps
$s$ contained in $\{\pm e_i,0\mid 1\le i\le k-1\}$, where $e_i$ denotes
the $i$th unit vector. That is for
$a,b\in \mathbb{Z}^{k-1}$ a walk from $a$ to $b$, $\gamma_{a,b}$, of
length $n$ is an $n$ tuple $(s_1,\dots,s_n)$ where $s_i\in \{\pm
e_i,0 \mid 1\le i\le k-1\}$ such that $b=a+\sum_{h=1}^ns_h$. We set
$\gamma_{a,b}(s_r)=a+\sum_{h=1}^rs_h\in \mathbb{Z}^{k-1}$ i.e.~the
element at which the walk $(s_1,\dots,s_{r})$ resides at step $r$.
The symmetric group $S_{k-1}$ acts on $\mathbb{Z}^{k-1}$ via $\sigma
(x_i)_{1\le i\le k-1}= (x_{\sigma^{-1}(i)})_{1\le i\le k-1}$. We set
$E_{k-1}=\langle \epsilon_i\mid 1\le i\le k-1\rangle$, where
$\epsilon_i(x_1,\dots,
x_i,\dots,x_{k-1})=(x_1,\dots,-x_i,\dots,x_{k-1})$. As shown in the
Appendix $\{\epsilon_i\sigma\mid \sigma \in S_{k-1},\, \epsilon_i
\in E_{k-1}\}$ carries a natural group structure via $(\epsilon_i
\sigma) \cdot (\epsilon_j\sigma')=\epsilon_i \sigma \epsilon_j
\sigma^{-1}\sigma\sigma'=\epsilon_i\epsilon_{
\sigma^{-1}(j)}\sigma\sigma'$. This group, denoted by ${\sf
B_{k-1}}$, is generated by $M_{k-1}=\{\epsilon_1,\rho_j\mid 2\le
j\le k-1\}$, where $\rho_j=(j,j-1)$, i.e.~$\rho$ transposes the
coordinates $x_{j-1}$ and $x_{j}$. By definition ${\sf B}_{k-1}$
acts on the set
\begin{equation}
\Delta_{k-1}=\{\pm e_i \mid 1\le i\le k-1\}\, \cup \,
\{e_i \pm e_j \mid 1\le i,j\le k-1\}
\end{equation}
and we call $\Delta_{k-1}$ the set of roots. We observe that there
exists a
bijection between $\Delta_{k-1}'=\{e_1, e_j-e_{j-1} \mid 2\le j\le k-1\}$
and the set of generators $M_{k-1}$ which maps each root
$\alpha\in\Delta_{k-1}'$ into
a corresponding reflection (in particular: ${\sf B}_{k-1}$ is generated
by reflections)
\begin{equation}\label{E:Co}
\{ e_1, e_j-e_{j-1}\mid 2\le j\le k-1 \} \longrightarrow
\{ \epsilon_1,\rho_j\mid 2\le j\le k-1\} , \quad \alpha \mapsto
\left(\beta_\alpha= x \mapsto x-2 \frac{\langle \alpha,x\rangle }
          {\langle\alpha,\alpha\rangle}\right)
\end{equation}
where $\langle x,x'\rangle$ denotes the standard scalar product in
$\mathbb{Z}^{k-1}$. It is clear that $\Delta_{k-1}'$ is a basis of
$\mathbb{Z}^{k-1}$. We refer to the sub spaces $\langle e_i\rangle$
for $1\le i\le k-1$ and $\langle e_j-e_{j-1}\rangle$ for $2\le j\le
k-1$ as walls. A ${\sf B}_{k-1}$-chamber is defined as the set of
$x\in \mathbb{Z}^{k-1}$ with the property that $\langle
\alpha,x\rangle \ge 0$ for all $\alpha \in \Delta_{k-1}$.
We denote the Weyl chamber
\begin{equation}\label{E:C_0}
C_0=\{x\in\mathbb{Z}^{k-1}\mid 0<x_{k-1}<x_{k-2}<\dots<x_1\} \ .
\end{equation}
For RNA secondary structures we have $k-1=1$, and
${\sf B}_1=E_1=\{\epsilon_1, 1\}$ and $\Delta'=\{e_1\}$.
For $3$-noncrossing RNA we have $k-1=2$ and
${\sf B}_2=E_2 \rtimes_\varphi S_2\cong D_4$ (where $\varphi:S_2\to
{\sf Aut}(E_2)$) is the dihedral group of order $8$.

The following theorem is the first step for the enumeration of RNA
structures. It will allow to interpret a certain class of digraphs
as walks in $\mathbb{Z}^{k-1}$ which remain in the interior of the Weyl
chamber $C_0$. The result is due to Chen~{\it et al} \cite{Chen:07a},
where it is formulated for matchings. The original bijection between
oscillating tableaux and matchings is due to Stanley and was
generalized Sundaram \cite{Sundaram:90a}.
We give a proof of Theorem~\ref{T:tableaux} in the Appendix.
\begin{theorem}\cite{Chen:07a}\label{T:tableaux}
There exists a bijection between $k$-noncrossing digraphs and walks
of length $n$ in $\mathbb{Z}^{k-1}$ which start and end at $a=(k-1,k-2,
\dots,1)$ having steps $0,\pm e_i$, $1\le i\le k-1$ such that $0<
x_{k-1}<\dots < x_1$ at any step.
I.e.~we have a bijection
\begin{equation}\label{E:bij3}
\mathcal{G}_{n,k} \longrightarrow
\{\gamma_{a,a}\mid \gamma_{a,a} \,
                    \text{\it remains inside the Weyl-chamber $C_0$}\} \ ,
\end{equation}
where $\mathcal{G}_{n,k}$ denotes the set of $k$-noncrossing digraphs of
length $n$.
\end{theorem}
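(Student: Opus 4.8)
The plan is to realize the bijection \eqref{E:bij3} as a composition of two simpler ones: first between $k$-noncrossing digraphs and a suitable class of oscillating tableaux, and then between those oscillating tableaux and the lattice walks in $C_0$ described in the statement. Only the first of these is genuinely combinatorial; the second is a direct translation of ``adding/removing a box'' into ``adding/subtracting a unit vector''.

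\emph{Step 1: digraphs $\leftrightarrow$ oscillating tableaux.} I would recall the bijection of Chen~{\it et al.} \cite{Chen:07a} (building on Stanley and Sundaram \cite{Sundaram:90a}) between $k$-noncrossing digraphs on $\{1,\dots,n\}$ and oscillating tableaux $\varnothing=\mu^{0},\mu^{1},\dots,\mu^{n}=\varnothing$ in which (i) $\mu^{i}$ is obtained from $\mu^{i-1}$ by adding one box, removing one box, or doing nothing, the last case occurring exactly at isolated vertices, and (ii) every $\mu^{i}$ has at most $k-1$ rows. The construction processes the vertices one at a time, maintaining a standard Young tableau whose shape is the current $\mu^{i}$ and updating it by a (reverse) RSK row-insertion keyed to whether the current vertex is a left endpoint, a right endpoint, or isolated; step-by-step reversibility is built into the RSK correspondence. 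The substantial point — and the step I expect to be the main obstacle — is the Greene-type identity underlying condition (ii): the maximal size of a set of mutually intersecting arcs of the digraph equals $\max_{0\le i\le n}\ell(\mu^{i})$, the largest number of rows occurring in the sequence. Granting this, the correspondence restricts precisely to a bijection between $\mathcal{G}_{n,k}$ and oscillating tableaux of length $n$ all of whose shapes have at most $k-1$ rows.

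\emph{Step 2: oscillating tableaux $\leftrightarrow$ walks in $C_0$.} A Young diagram $\lambda=(\lambda_{1}\ge\dots\ge\lambda_{k-1}\ge 0)$ with at most $k-1$ rows is encoded by the point
\[
x(\lambda)=\bigl(\lambda_{1}+(k-1),\ \lambda_{2}+(k-2),\ \dots,\ \lambda_{k-1}+1\bigr)\in\mathbb{Z}^{k-1},
\]
and since $x(\lambda)_{i}-x(\lambda)_{i+1}=(\lambda_{i}-\lambda_{i+1})+1\ge 1$ and $x(\lambda)_{k-1}=\lambda_{k-1}+1\ge 1$ we have $x(\lambda)\in C_{0}$; conversely, subtracting the staircase $(k-1,k-2,\dots,1)$ recovers $\lambda$, and $x(\varnothing)=a$. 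Adding a box to the $i$-th row of $\lambda$ is exactly the move $x\mapsto x+e_{i}$, removing one is $x\mapsto x-e_{i}$, and leaving $\lambda$ unchanged is the null step $0$; moreover, the requirement that $x\pm e_{i}$ again be of the form $x(\lambda')$ for a genuine Young diagram $\lambda'$ is, coordinate by coordinate, precisely the requirement $0<x_{k-1}<\dots<x_{1}$. Hence an oscillating tableau $\varnothing=\mu^{0},\dots,\mu^{n}=\varnothing$ with at most $k-1$ rows is the same datum as a walk $(s_{1},\dots,s_{n})$ with $s_{r}\in\{0,\pm e_{i}\mid 1\le i\le k-1\}$ and $\gamma_{a,a}(s_{r})=x(\mu^{r})$ that starts and ends at $a=(k-1,\dots,1)$ and never touches a wall of $C_{0}$.

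\emph{Conclusion.} Composing the bijections of Steps 1 and 2 yields \eqref{E:bij3}. The only nontrivial input is the Greene-type statement in Step 1, which I would establish as in \cite{Chen:07a} by comparing, along the insertion/deletion process, the union of the top $j$ rows of the successive tableaux with $j$-fold unions of systems of mutually intersecting arcs; this, together with the routine verification of step-by-step reversibility, is deferred to the Appendix as announced.
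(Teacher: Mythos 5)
Your proposal is correct and follows essentially the same route as the paper's Appendix proof: a bijection digraphs $\leftrightarrow$ oscillating tableaux built from RSK row-insertion/extraction (with the crossing number controlled by the maximal number of rows — which the paper establishes via Schensted's theorem on longest decreasing subsequences of auxiliary permutations $\pi_i$, a statement equivalent to your Greene-type identity), composed with the staircase shift $\lambda\mapsto(\lambda_1+(k-1),\dots,\lambda_{k-1}+1)$ identifying bounded-row oscillating tableaux with walks confined to $C_0$. The only substantive step you defer is exactly the one the paper also relegates to the Appendix, so the plan is sound.
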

\begin{figure}[ht]
\centerline{%
\epsfig{file=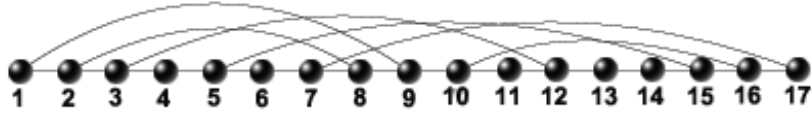,width=0.75\textwidth}\hskip15pt
 }
\caption{\small A $5$-noncrossing digraph. In the text we show how to
derive from this digraph an oscillating tableau and subsequently the
corresponding walk $\gamma_{a,a}$ in $\mathbb{Z}^4$.}
\label{fig:3}
\end{figure}
\begin{figure}[ht]
\centerline{%
\epsfig{file=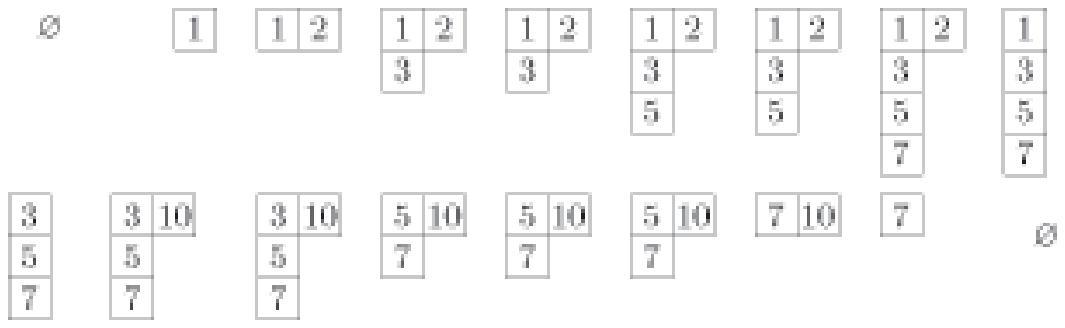,width=0.75\textwidth}\hskip15pt
 }
\caption{1} \label{fig:5}
\end{figure}

The $5$-noncrossing digraph in Figure~\ref{fig:3} corresponds to a
oscillating tableau as follows:
from right to left start at vertex $17$, which is a terminus. The
corresonding origin is $7$ which is inserted via the RSK algorithm
into the empty shape. Next insert the origin corresponding to $16$ and
$15$, respectively. At $14$ and $13$ nothing happens since they are
isolated vertices. At $12$ origin $3$ is inserted and $11$ is isolated.
$10$ is an origin of an arc and accordingly removed from the tableaux.
It is now clear how to proceed. The walk $\gamma_{a,a}$ is obtained
from the tableau as follows: its $x_i$-coordinate is the number of
squares in the $i$-th column, i.e.~$\gamma_{a,a}$ is given by
{\small
\begin{align*}
&(4,3,2,1),(5,3,2,1),(6,3,2,1),(6,4,2,1),(6,4,2,1),(6,4,2,1),(6,4,3,1),
(6,4,3,1),(5,4,3,1),\\
&(5,4,3,2),(6,4,3,2),(6,4,3,1),(6,4,3,1),(6,4,2,1),(6,4,2,1),(6,3,2,1),
(5,3,2,1),(4,3,2,1) \ .
\end{align*}}
We show in the appendix in detail why this is a bijection.

We next discuss the reflection principle. The key idea is to count
walks that remain in the interior of a Weyl chamber by counting all
walks. Then one utilizes the fact that all walks that touch a wall
at some step can be paired and eventually cancel themselves in the
enumeration. The particular way to obtain this pairing is by reflecting
the walk at the corresponding wall. The following observation is essential
for the reflection principle, formulated in Theorem~\ref{T:reflect} below.
\begin{lemma}\label{L:touch}
Let  $\Delta_{k-1}'=\{ e_1, e_j-e_{j-1}\mid 2\le j\le k-1 \}$. Then
every walk starting at some lattice point in the interior of $C$
having steps $\pm e_i,0$ that crosses from inside ${C}$ into outside
${C}$ touches a subspace $\langle e_i- e_{i-1}\mid 2\le i\le
k-1\rangle$ or $\langle e_i\mid 1\le i\le k-1\rangle$.
\end{lemma}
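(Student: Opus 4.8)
The plan is to analyze a single step of the walk and use the discreteness of the lattice $\mathbb{Z}^{k-1}$ together with the fact that each step changes at most one coordinate by $\pm 1$. Suppose $\gamma_{a,b}$ is a walk with steps in $\{0,\pm e_i\}$, and let $x=\gamma_{a,b}(s_{r-1})$ lie strictly inside $C$ (so $\langle\alpha,x\rangle>0$ for all $\alpha\in\Delta_{k-1}$) while $y=\gamma_{a,b}(s_r)=x+s_r$ lies outside $C$ (so $\langle\alpha,y\rangle<0$ for some root $\alpha$, which we may take in $\Delta_{k-1}'$ after acting by ${\sf B}_{k-1}$, since the walls are the ${\sf B}_{k-1}$-images of $\langle e_1\rangle$ and the $\langle e_j-e_{j-1}\rangle$). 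The key quantitative observation is that for $\alpha\in\Delta_{k-1}'=\{e_1\}\cup\{e_j-e_{j-1}\}$ and a step $s_r\in\{0,\pm e_i\}$ we have $\langle\alpha,s_r\rangle\in\{0,\pm 1\}$. Hence $\langle\alpha,y\rangle=\langle\alpha,x\rangle+\langle\alpha,s_r\rangle$, with $\langle\alpha,x\rangle\ge 1$ (strict positivity plus integrality) and $\langle\alpha,y\rangle\le -1$, forces $\langle\alpha,x\rangle=1$, $\langle\alpha,s_r\rangle=-1$, $\langle\alpha,y\rangle=-1$ — wait, that gives $\langle\alpha,y\rangle=0$. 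Let me restate: $\langle\alpha,x\rangle\ge 1$ and $\langle\alpha,s_r\rangle\ge -1$ give $\langle\alpha,y\rangle\ge 0$, so in fact $y$ cannot be strictly outside across a wall of $\Delta_{k-1}'$; the crossing must touch the wall, i.e. $\langle\alpha,y\rangle=0$ for the relevant $\alpha$. This is exactly the assertion: the walk lands on the subspace $\langle\alpha\rangle^{\perp}$-defining wall, which is one of $\langle e_i-e_{i-1}\rangle$ or $\langle e_i\rangle$.

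Concretely I would argue as follows. First, record the elementary inequality $|\langle\alpha,s\rangle|\le 1$ for every $\alpha\in\Delta_{k-1}'$ and every admissible step $s\in\{0,\pm e_i\mid 1\le i\le k-1\}$; this is immediate from $\alpha$ being either a single unit vector or a difference of two distinct unit vectors. Second, let $r$ be the first index where $\gamma(s_r)\notin C$; then $x:=\gamma(s_{r-1})\in\mathrm{int}(C)$ and $y:=\gamma(s_r)=x+s_r\notin C$, so there is $\alpha\in\Delta_{k-1}$ with $\langle\alpha,y\rangle<0$. Using the ${\sf B}_{k-1}$-action one reduces to $\alpha\in\Delta_{k-1}'$ (or alternatively one works directly: the defining inequalities of $C$ are $x_1>x_2>\dots>x_{k-1}>0$, so a violation means some $x_{i-1}\le x_i$ or $x_{k-1}\le 0$ at step $r$, and these are governed by the roots $e_i-e_{i-1}$ and $e_{k-1}$). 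Third, combine: $\langle\alpha,y\rangle=\langle\alpha,x\rangle+\langle\alpha,s_r\rangle\ge 1+(-1)=0$ since $\langle\alpha,x\rangle\ge 1$ by integrality of $\langle\alpha,x\rangle$ and strict positivity on $\mathrm{int}(C)$. Together with $\langle\alpha,y\rangle<0$ being impossible, we get $\langle\alpha,y\rangle=0$, i.e. $y$ lies on the wall $\ker\langle\alpha,\cdot\rangle$, which is one of the listed subspaces. Hence the crossing step touches a wall, proving the lemma.

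The only mild subtlety — and the step I would be most careful about — is the reduction from a general violating root $\alpha\in\Delta_{k-1}$ to one in the distinguished set $\Delta_{k-1}'$, because $\Delta_{k-1}$ also contains roots like $e_i+e_j$, for which the naive bound $|\langle\alpha,s\rangle|\le 1$ still holds but whose hyperplanes are not among the "walls" of $C_0$. The clean fix is to not pass through $\Delta_{k-1}$ at all, but to use the explicit description $C_0=\{x_1>x_2>\dots>x_{k-1}>0\}$ from \eqref{E:C_0}: leaving $C_0$ means some consecutive inequality $x_{i-1}>x_i$ or the inequality $x_{k-1}>0$ fails at step $r$ while holding at step $r-1$; since each coordinate moves by at most $1$ in absolute value per step, the failed inequality can only become an equality (not a strict reversal), i.e. $x_{i-1}=x_i$ or $x_{k-1}=0$ at step $r$, which places $\gamma(s_r)$ exactly on $\langle e_i-e_{i-1}\rangle^{\perp}$-wall or the $\langle e_{k-1}\rangle$-wall — precisely the subspaces named in the statement. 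This direct route avoids any group-theoretic bookkeeping and makes the "at most one step" mechanism transparent.
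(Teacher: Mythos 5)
Your proposal is correct, and the ``clean fix'' you settle on at the end --- working directly with the defining inequalities $x_1>x_2>\dots>x_{k-1}>0$ of $C_0$ and observing that each step changes any such difference by at most $1$, so a failed inequality must first become an equality --- is precisely the paper's own argument. The initial detour through the full root system $\Delta_{k-1}$ is unnecessary (as you yourself note), but the argument you land on is the intended one.
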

Let $\Gamma_n(a,b)$ be the number of walks $\gamma_{a,b}$. For
$a,b\in C_0$ (eq.~(\ref{E:C_0}))
let $\Gamma_n^+(a,b)$ denote the number of walks $\gamma_{a,b}$ that never
touch a wall, i.e.~remain in the interior of $C_0$. Finally for
$a,b\in \mathbb{Z}^{k-1}$, let $\Gamma_n^-(a,b)$ denote the number of walks
$\gamma_{a,b}=(s_1,\dots,s_n)$ that hit a wall at some step $s_r$.
$\ell(\beta)$ denotes the length of $\beta\in{\sf B}_{k-1}$.
For $a=b=(k-1,\dots,1)$ we have according to Theorem~\ref{T:tableaux}
\begin{equation}
\Gamma_n^+(a,a) = {\sf G}_{n,k}\ ,
\end{equation}
where ${\sf G}_{n,k}=\vert \mathcal{G}_{n,k}\vert$.
\begin{theorem}{\bf (Reflection-Principle)}\cite{Gessel:92a}
\label{T:reflect}
Suppose $a,b\in C_0$, then we have
\begin{equation}
\Gamma_n^+(a,b)=
           \sum_{\beta \in {\sf B}_{k-1}}(-1)^{\ell(\beta)} \,
                  \Gamma_n(\beta(a),b) \ .
\end{equation}
\end{theorem}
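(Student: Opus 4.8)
The plan is to prove the reflection-principle formula by the standard sign-reversing involution argument, using Lemma~\ref{L:touch} as the combinatorial input that the walls the walks can touch are exactly the reflecting hyperplanes of the group ${\sf B}_{k-1}$. First I would rewrite the right-hand side as
\[
\sum_{\beta\in{\sf B}_{k-1}}(-1)^{\ell(\beta)}\,\Gamma_n(\beta(a),b)
=\sum_{\beta\in{\sf B}_{k-1}}(-1)^{\ell(\beta)}
 \#\{\gamma_{\beta(a),b}\},
\]
i.e.\ a signed count over pairs $(\beta,\gamma)$ where $\gamma$ is an arbitrary walk (no chamber restriction) from $\beta(a)$ to $b$. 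The term $\beta=\mathrm{id}$ contributes $\Gamma_n(a,b)$, which counts \emph{all} walks from $a$ to $b$, among which the ones that never touch a wall are precisely the $\Gamma_n^+(a,b)$ we want. So it suffices to exhibit a sign-reversing involution on the set of pairs $(\beta,\gamma)$ for which $\gamma$ (after translating its start to $\beta(a)$) touches a wall at some step, leaving only the pairs $(\mathrm{id},\gamma)$ with $\gamma\subset\operatorname{int}C_0$.

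Next I would construct the involution. Given a pair $(\beta,\gamma)$ where the walk $\gamma$, viewed as $(s_1,\dots,s_n)$ starting at $\beta(a)$, meets some wall, let $r$ be the \emph{first} step index at which $\gamma$ lands on a wall, and among the walls containing $\gamma(s_r)$ pick a canonical one — say the one corresponding to the reflection $\beta_\alpha$ with $\alpha\in\Delta_{k-1}'$ of least index (this requires Lemma~\ref{L:touch}, which guarantees that a walk leaving $C_0$ must actually touch one of these walls, so the set of candidate walls at step $r$ is nonempty and we may fix a deterministic choice rule). Define the new pair $(\beta',\gamma')$ by reflecting the initial segment $(s_1,\dots,s_r)$ in that wall: concretely $\beta'=\beta_\alpha\,\beta$ and $\gamma'=(\beta_\alpha s_1,\dots,\beta_\alpha s_r,s_{r+1},\dots,s_n)$. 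Because the wall passes through $\gamma(s_r)$, the reflected initial segment ends at the same point $\gamma(s_r)$, so $\gamma'$ is a genuine walk from $\beta'(a)=\beta_\alpha(\beta(a))$ to $b$; and since $\beta_\alpha$ permutes $\Delta_{k-1}=\{\pm e_i\}\cup\{e_i\pm e_j\}$, the reflected steps $\beta_\alpha s_h$ still lie in $\{0,\pm e_i\}$, so $\gamma'$ is an admissible walk. One checks that $r$ and the chosen wall are unchanged under this operation (the reflected prefix touches the same wall first, since reflection fixes the wall pointwise and is an isometry), hence applying the map twice returns $(\beta,\gamma)$: it is an involution. Finally $\ell(\beta_\alpha\beta)=\ell(\beta)\pm1$, so $(-1)^{\ell(\beta')}=-(-1)^{\ell(\beta)}$ and the two paired terms cancel. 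The fixed-point-free part of the involution therefore contributes $0$, and the only surviving pairs are $(\mathrm{id},\gamma)$ with $\gamma$ never touching a wall, i.e.\ $\gamma\subset\operatorname{int}C_0$; these number $\Gamma_n^+(a,b)$, proving the identity.

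The main obstacle is verifying that the prescription ``first step on a wall, plus canonical choice of wall'' genuinely yields an involution — in particular that reflecting the prefix does not create a new wall-contact at an earlier step $r'<r$ and does not change which canonical wall is selected at step $r$. The first point follows because $\gamma(s_h)\in\operatorname{int}C_0$ for $h<r$ by minimality of $r$, and $\beta_\alpha$ maps $\operatorname{int}C_0$ isometrically onto the interior of an adjacent chamber that is still disjoint from the walls; more carefully, $\beta_\alpha$ permutes the arrangement of walls, fixes the chosen wall, and sends each of the first $r-1$ positions (which are strictly on one side of every wall) to points that are again strictly off every wall, so no earlier contact is introduced. The second point — stability of the canonical-wall choice — is immediate once the choice rule depends only on $\gamma(s_r)$ and the combinatorics of which walls pass through it, both of which are preserved. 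Once these two verifications are in place, the rest (sign behaviour of $\ell$ under multiplication by a reflection, admissibility of reflected steps) is routine, and Lemma~\ref{L:touch} supplies exactly the fact that makes the whole scheme applicable in the first place.
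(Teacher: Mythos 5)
Your overall architecture is the same as the paper's: split $\Gamma_n(a,b)=\Gamma_n^+(a,b)+\Gamma_n^-(a,b)$, observe via Lemma~\ref{L:touch} that for $\beta\neq{\sf id}$ every walk counted by $\Gamma_n(\beta(a),b)$ touches a wall, and cancel the wall-touching pairs $(\beta,\gamma)$ by a sign-reversing prefix-reflection involution. There is, however, a genuine gap exactly at the point you yourself flag as the main obstacle: you insist that the canonical reflecting wall be chosen among the hyperplanes indexed by the simple roots $\Delta_{k-1}'$, and that choice is either undefined or unstable under your map. The reflecting hyperplanes of ${\sf B}_{k-1}$ are all of $x_i=0$ and $x_i=\pm x_j$, of which the bounding walls of $C_0$ form a proper subset once $k-1\ge 2$. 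If $r$ is the first step at which $\gamma$ lies on \emph{any} reflecting hyperplane, that hyperplane need not correspond to a simple root (for $k-1=3$, take $\beta$ the reflection in $x_1=-x_2$, so $\beta(a)=(-2,-3,1)$; the step $+e_1$ lands on $(-1,-3,1)$, which lies on $x_1+x_3=0$ and on no bounding wall), so your ``least index $\alpha\in\Delta_{k-1}'$'' rule has nothing to select -- Lemma~\ref{L:touch} only guarantees a bounding-wall contact at the step where the walk crosses $\partial C_0$, not at its first hyperplane contact. If instead you take $r$ to be the first contact with a \emph{bounding} wall, your key verification fails: the positions $\gamma(s_h)$, $h<r$, avoid the bounding walls but may lie on other reflecting hyperplanes, and a simple reflection $\beta_\alpha$ does not stabilize the set of bounding walls (the transposition reflection maps $x_1=0$ to $x_2=0$), so the reflected prefix can acquire a bounding-wall contact earlier than $r$ and the map is not an involution.

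The repair is what the paper does: totally order \emph{all} roots of $\Delta_{k-1}$, let $r$ be the first step with $\langle\gamma(s_r),\alpha\rangle=0$ for some $\alpha\in\Delta_{k-1}$, let $\alpha^*$ be the largest such root, and reflect the prefix in $\beta_{\alpha^*}$. Since ${\sf B}_{k-1}$ permutes the full hyperplane arrangement, the first contact step and the selected root are preserved and the map is a genuine involution; moreover $(-1)^{\ell(\cdot)}$ is the sign character of ${\sf B}_{k-1}$, so it is reversed by multiplication by \emph{any} reflection (your ``$\ell(\beta_\alpha\beta)=\ell(\beta)\pm 1$'' should be weakened to ``$\ell(\beta_{\alpha^*}\beta)-\ell(\beta)$ is odd''). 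With that modification the rest of your argument -- admissibility of the reflected steps, the treatment of the $\beta={\sf id}$ term, and the identification of the surviving walks with $\Gamma_n^+(a,b)$ -- goes through as you describe.
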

Theorem \ref{T:reflect} allows us to compute the exponential
generating function for $\Gamma_{n}^{+}(a,b)$, which is the number of
walks from $a$ to $b$, that remain in the interior of $C_{0}$
\cite{Grabiner:93a}.
\begin{lemma}\label{L:walks}\cite{Grabiner:93a}
Let $I_{r}(2x)=\sum_{j \ge 0}x^{2r+j}/{j!(r+j)!}$ be the
hyperbolic Bessel function of the first kind of order $r$. Then the
generating functions for the numbers of $k$-noncrossing digraphs of
length $n$ and for $k$-noncrossing digraphs of length $n$ without
isolated points, $\Gamma_n^+(a,b)$ and ${\Gamma'}_n^+(a,b)$ are given by
\begin{eqnarray}
\sum_{n \ge 0}\Gamma_{n}^{+}(a,b)\frac{x^{n}}{n!}& =& e^{x}\det
[I_{a_{i}-b_{j}}(2x)-I_{a_{i}+b_{j}}(2x)]|_{i,j=1}^{k-1} \\
\sum_{n \ge 0}{\Gamma'}_{n}^{+}(a,b)\frac{x^{n}}{n!}& =& \det
[I_{a_{i}-b_{j}}(2x)-I_{a_{i}+b_{j}}(2x)]|_{i,j=1}^{k-1} \ .
\end{eqnarray}
\end{lemma}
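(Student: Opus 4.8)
The plan is to combine the reflection principle (Theorem~\ref{T:reflect}) with an explicit computation of the exponential generating function of \emph{all} walks, and then to recognize the resulting alternating sum over ${\sf B}_{k-1}$ as a determinant; this is Grabiner's argument. First I would compute, for arbitrary $c,b\in\mathbb{Z}^{k-1}$, the generating function $\sum_{n\ge 0}\Gamma_n(c,b)\,x^n/n!$ with no wall restriction. The key observation is that a walk of length $n$ with steps in $\{0,\pm e_i\mid 1\le i\le k-1\}$ is the same datum as a set partition of the $n$ time-slots into a ``null'' block (holding the $0$-steps) together with blocks $T_1,\dots,T_{k-1}$ (where $T_i$ holds the slots in which coordinate $i$ moves), equipped, for each $i$, with a one-dimensional walk with steps $\pm1$ on $T_i$ realizing the displacement $b_i-c_i$; the constraint $\sum_t s_t=b-c$ decouples coordinatewise. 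Since the number of length-$m$ one-dimensional $\pm1$-walks with net displacement $r$ has exponential generating function $I_r(2x)$ (a one-line computation from the series for $I_r$, with the convention $I_{-r}=I_r$) and the null block contributes the factor $e^x$, the product/composition formula for exponential generating functions gives
\begin{equation*}
\sum_{n\ge 0}\Gamma_n(c,b)\,\frac{x^n}{n!}=e^{x}\prod_{i=1}^{k-1}I_{c_i-b_i}(2x).
\end{equation*}

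Next I would apply Theorem~\ref{T:reflect} term by term in $n$ and substitute $c=\beta(a)$, obtaining $\sum_{n}\Gamma_n^+(a,b)\,x^n/n!=e^{x}\sum_{\beta\in{\sf B}_{k-1}}(-1)^{\ell(\beta)}\prod_{i}I_{\beta(a)_i-b_i}(2x)$. Writing $\beta\in{\sf B}_{k-1}$ as a signed permutation, i.e.\ a permutation $\pi\in S_{k-1}$ of the coordinates together with a sign vector $\epsilon\in\{\pm1\}^{k-1}$, one has $\beta(a)_i=\epsilon_i\,a_{\pi^{-1}(i)}$; and because ${\sf B}_{k-1}$ is generated by the reflections $M_{k-1}$, its length parity equals the determinant of the associated signed-permutation matrix, $(-1)^{\ell(\beta)}=\operatorname{sgn}(\pi)\prod_i\epsilon_i$. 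The sum over $\epsilon$ then factors coordinatewise, and using $I_{-r}(2x)=I_r(2x)$ one gets $\sum_{\epsilon_i=\pm1}\epsilon_i\,I_{\epsilon_i a_{\pi^{-1}(i)}-b_i}(2x)=I_{a_{\pi^{-1}(i)}-b_i}(2x)-I_{a_{\pi^{-1}(i)}+b_i}(2x)$; summing over $\pi$ with weight $\operatorname{sgn}(\pi)$ is precisely the Leibniz expansion of $\det\bigl[I_{a_i-b_j}(2x)-I_{a_i+b_j}(2x)\bigr]_{i,j=1}^{k-1}$ (up to transposing the matrix, which does not change the determinant). This establishes the first identity.

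For the second identity, observe that under the bijection of Theorem~\ref{T:tableaux} the isolated vertices of a $k$-noncrossing digraph correspond precisely to the $0$-steps of the associated walk, so ${\Gamma'}_n^+(a,b)$ counts the interior walks with no $0$-step. Since every element of ${\sf B}_{k-1}$ is a signed coordinate permutation, it maps $0$-steps to $0$-steps and nonzero steps to nonzero steps, so the cancellation underlying Theorem~\ref{T:reflect} restricts verbatim to this subclass of walks. Repeating the above with the null block now contributing $1$ rather than $e^x$ removes exactly the prefactor $e^x$ and yields the second formula.

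Since this is a known result I do not expect a genuine obstacle; the one place that needs care is the bookkeeping in the second step — the identification $(-1)^{\ell(\beta)}=\operatorname{sgn}(\pi)\prod_i\epsilon_i$ for the hyperoctahedral group with the specified generators $M_{k-1}$, and the collapse of the $2^{k-1}(k-1)!$-term signed-permutation sum into a single $(k-1)\times(k-1)$ determinant — together with fixing the convention $I_{-r}=I_r$ so that the one-dimensional generating function is recorded correctly.
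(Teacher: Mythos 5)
Your proposal is correct and follows essentially the same route as the paper: compute the unrestricted-walk exponential generating function as $e^{x}\prod_{i}I_{a_i-b_i}(2x)$ (the paper does this by coefficient extraction in auxiliary variables $u_i$, you by the product formula for exponential generating functions --- the same computation), then apply Theorem~\ref{T:reflect} and collapse the signed sum over ${\sf B}_{k-1}$ into the determinant via the Leibniz expansion. Your bookkeeping of the sign $(-1)^{\ell(\beta)}=\operatorname{sgn}(\pi)\prod_i\epsilon_i$ and your justification of the second formula (the reflection involution preserves walks with no $0$-step, so the factor $e^{x}$ is simply dropped) are in fact spelled out more carefully than in the paper's proof.
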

Now we can combine our results in order to enumerate $k$-noncrossing
digraphs using the bijection between digraphs and walks.
Theorem~\ref{T:tableaux} implies that the number of $k$-noncrossing
digraphs is equal to $\Gamma^+_{k}(a,a)$, the number of lattice walks
in $\mathbb{Z}^{k-1}$ of length $n$ that remain in the interior of
$C_{0}$ from $(k-1,\ldots 1)$ to itself with steps $0,\pm e_{i}$, $1
\le i \le k-1$. According to Lemma~\ref{L:walks} the generating
functions for walks with steps $e_i=\pm 1,0$ and $e_i=\pm 1$,
where $1\le i\le k-1$ are given by
\begin{equation}\label{E:gen}
e^{x}\det_{}[I_{i-j}(2x)-I_{i+j}(2x)]|_{i,j=1}^{k-1} \quad
\text{\rm and}\quad \det[I_{i-j}(2x)-I_{i+j}(2x)]|_{i,j=1}^{k-1} \ ,
\end{equation}
respectively.
Let $f_{k}(n,\ell)$ denote the number of $k$-noncrossing digraphs with
$\ell$ isolated points. Then
\begin{align}\label{E:ww0}
f_{k}(n,\ell)& ={n \choose \ell} f_{k}(n-\ell,0) \\
\label{E:ww1}
\det[I_{i-j}(2x)-I_{i+j}(2x)]|_{i,j=1}^{k-1} &=
\sum_{n\ge 1} f_{k}(n,0)\cdot\frac{x^{n}}{n!} \\
\label{E:ww2}
e^{x}\det[I_{i-j}(2x)-I_{i+j}(2x)]|_{i,j=1}^{k-1}
&=(\sum_{\ell \ge 0}\frac{x^{\ell}}{\ell!})(\sum_{n \ge
1}f_{k}(n,0)\frac{x^{n}}{n!})=\sum_{n\ge 1}
\left\{\sum_{\ell=0}^nf_{k}(n,\ell)\right\}\cdot\frac{x^{n}}{n!} \ .
\end{align}
In particular we obtain for
$k=2$ and $k=3$
\begin{equation}\label{E:2-3}
f_2(n,\ell)  =  \binom{n}{\ell}\,C_{(n-\ell)/2}\quad
\text{\rm and}\quad  f_{3}(n,\ell)=
{n \choose \ell}\left[C_{\frac{n-\ell}{2}+2}C_{\frac{n-\ell}{2}}-
      C_{\frac{n-\ell}{2}+1}^{2}\right] \ ,
\end{equation}
where $C_m$ denotes the $m$th Catalan number.
\section{RNA structures}\label{S:RNA}

In this section we derive the generating function for RNA structures.
The successful strategy consists in counting the ``wrong'' object
``multiple'' times.
To be precise we will enumerate all $k$-noncrossing digraphs with $j$
$1$-arcs by relating them to a {\it family} whose cardinality
we can easily compute.
We denote the number of RNA structures with exactly $\ell$ isolated
vertices by ${\sf S}_k(n,\ell)$. Suppose $k\ge 2$
and let $\mathcal{G}^{}_{n,k}(\ell,j)$ be the set of all $k$-noncrossing
digraphs having exactly $\ell$ isolated points and exactly $j$ $1$-arcs,
where a $1$-arc is an arc of the form $(i,i+1)$, $1\le i\le n-1$. Setting
$
{\sf G}_{k}(n,\ell,j)=\vert \mathcal{G}_{n,k}(\ell,j)\vert
$,
we have in particular ${\sf S}_k(n,\ell)={\sf G}_{k}(n,\ell,0)$.
\begin{theorem}\label{T:cool1}
Let $k\in\mathbb{N}$, $k\ge 2$, let $C_m$ denote the $m$-th Catalan number
and $f_k(n,\ell)$ be the number of $k$-noncrossing digraphs over $n$
vertices with exactly $\ell$ isolated vertices. Then the
number of RNA structures with $\ell$ isolated vertices,
${\sf S}_k(n,\ell)$, is given by
\begin{equation}\label{E:da}
{\sf S}_k(n,\ell) = \sum_{b=0}^{(n-\ell)/2}
                   (-1)^b\binom{n-b}{b}f_k(n-2b,\ell)  \  ,
\end{equation}
where $f_k(n-2b,\ell)$ is given by the generating function in
eq.~{\rm (\ref{E:ww1})}.
Furthermore the number of $k$-noncrossing RNA structures, ${\sf S}_k(n)$
is
\begin{equation}\label{E:sum}
{\sf S}_k(n)
=\sum_{b=0}^{\lfloor n/2\rfloor}(-1)^{b}{n-b \choose b}
\left\{\sum_{\ell=0}^{n-2b}f_{k}(n-2b,\ell)\right\}
\end{equation}
where $\{\sum_{\ell=0}^{n-2b}f_{k}(n-2b,\ell)\}$ is given by the
generating function in eq.~{\rm (\ref{E:ww2})}.
\end{theorem}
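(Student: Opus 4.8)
The plan is to set up an inclusion–exclusion over the set of $1$-arcs. Fix $n$, $k$, and $\ell$. The key observation is that an RNA structure with $\ell$ isolated vertices is exactly a $k$-noncrossing digraph with $\ell$ isolated vertices and $j=0$ one-arcs, i.e.\ ${\sf S}_k(n,\ell)={\sf G}_k(n,\ell,0)$. To extract the $j=0$ term from the full family of $k$-noncrossing digraphs, I would first establish a bijective relation between digraphs carrying a distinguished set of $1$-arcs and smaller digraphs. Concretely: given a $k$-noncrossing digraph $D$ on $n$ vertices together with a chosen subset $A$ of $b$ of its $1$-arcs, contract each arc in $A$ (identify $i$ with $i+1$); since a $1$-arc spans two consecutive vertices and no arc of $D$ other than those in $A$ can be incident to those vertices, the contraction produces a well-defined $k$-noncrossing digraph on $n-2b$ vertices, and the number of isolated vertices is unchanged (each contracted $1$-arc removes two non-isolated vertices). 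The reverse operation — pick $b$ "insertion slots" and blow each up into a $1$-arc — must be controlled so that the new arcs are genuinely $1$-arcs and the map is a bijection; this is where the binomial coefficient $\binom{n-b}{b}$ comes from, counting the ways to place $b$ non-overlapping $1$-arcs along a line of $n-2b$ "old" vertices (equivalently, choosing $b$ positions among $n-b$ after the contracted picture has been reinflated).

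Having set up this correspondence, I would derive the identity
\begin{equation}\label{E:plan-count}
f_k(n-2b,\ell)=\sum_{j\ge b}\binom{j}{b}\,{\sf G}_k(n,\ell,j),
\end{equation}
or rather its inverse: summing ${\sf G}_k(n,\ell,j)$ over all $j$ with weight counting chosen $b$-subsets of the $j$ one-arcs reproduces "digraph on $n-2b$ vertices with a marked reinflatable structure", whose count is $\binom{n-b}{b}$ times $f_k(n-2b,\ell)$ — I need to double-check the exact form, but the shape is a binomial-transform identity relating the generating polynomial $\sum_j {\sf G}_k(n,\ell,j)t^j$ to the numbers $f_k(n-2b,\ell)$. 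Once this is in hand, Möbius inversion (the standard inclusion–exclusion for the "exactly $0$" term, i.e.\ evaluating the polynomial identity at the right point and inverting the binomial transform) yields
\begin{equation}\label{E:plan-incl}
{\sf G}_k(n,\ell,0)=\sum_{b=0}^{(n-\ell)/2}(-1)^b\binom{n-b}{b}f_k(n-2b,\ell),
\end{equation}
which is precisely \eqref{E:da}. The upper limit $(n-\ell)/2$ is forced because a digraph on $n-2b$ vertices with $\ell$ isolated vertices needs $n-2b-\ell\ge 0$ non-isolated vertices, i.e.\ $b\le (n-\ell)/2$. Finally, \eqref{E:sum} follows from \eqref{E:da} by summing over $\ell$ from $0$ to $n-2b$ and swapping the order of summation, since ${\sf S}_k(n)=\sum_{\ell}{\sf S}_k(n,\ell)$ and $\sum_\ell f_k(n-2b,\ell)$ is exactly the coefficient identified in \eqref{E:ww2}; the interchange of the two finite sums is routine.

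The main obstacle, I expect, will be pinning down the contraction/reinflation bijection precisely enough that the combinatorial factor is exactly $\binom{n-b}{b}$ and not some other binomial coefficient — in particular one must be careful that two distinct $1$-arcs in an RNA-structure sense are never adjacent in a way that would make the contraction ambiguous, and that after reinflation the inserted arcs are $1$-arcs and no spurious crossings are created (they are not, since a $1$-arc $(i,i+1)$ is nested below or disjoint from every other arc, so contracting it cannot change the crossing number). A secondary subtlety is the bookkeeping on isolated vertices: one must verify that the contraction neither creates nor destroys isolated vertices, which is immediate since the two endpoints of a contracted $1$-arc are non-isolated and get removed together. Everything after the bijection is formal: it is the binomial (Pascal-type) inversion already implicit in the relation between \eqref{E:ww1}, \eqref{E:ww2} and the generating function manipulations, so I would present those steps tersely.
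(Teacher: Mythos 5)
Your proposal is correct and follows essentially the same route as the paper: the identity $\sum_{j\ge b}\binom{j}{b}{\sf G}_k(n,\ell,j)=\binom{n-b}{b}f_k(n-2b,\ell)$ obtained by counting ``$b$ chosen $1$-arcs plus an arbitrary $k$-noncrossing digraph on the remaining $n-2b$ vertices'' (your contraction/reinflation is exactly the paper's construction of the family $\mathcal{F}$, including the observations that a $1$-arc cannot participate in a crossing and that isolated-vertex counts are preserved), followed by binomial inversion to extract the $j=0$ term (the paper phrases this as Taylor-expanding $F_k(x)=\sum_j{\sf G}_k(n,\ell,j)x^j$ at $x=1$ and evaluating at $x=0$). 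The sign ambiguity you flag resolves exactly as you guess, and the summation over $\ell$ for \eqref{E:sum} is the same routine step.
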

\begin{proof}
We first prove
\begin{equation}\label{E:form1}
\sum_{j\ge b}\binom{j}{b}\, {\sf G}_k(n,\ell,j)=
\binom{n-b}{b}
\, f_k(n-2b,\ell) \ .
\end{equation}
For this purpose we construct a family $\mathcal{F}$ of
$\mathcal{G}_{n,k}$-digraphs, having exactly $\ell$ isolated
points and having at least $b$ $1$-arcs as follows:
select {\sf (a)} $b$ $1$-arcs, and {\sf (b)} an arbitrary
$k$-noncrossing digraph with exactly $\ell$ isolated points
over the remaining $n-2b$ vertices. Let $\mathcal{F}$ be the
resulting family of digraphs. \\
{\it Claim $1$.} Each element $\theta\in \mathcal{F}$ is contained in
$\mathcal{G}_{n,k}(\ell,j)$ for some $j\ge b$.\\
To prove this we observe that a $1$-arc cannot cross any other arc,
i.e.~cannot be contained in a set of mutually crossing arcs. As a
result for $k\ge 2$ our construction generates digraphs that are
$k$-noncrossing. Clearly $\theta$ has exactly $\ell$ isolated
vertices and in step {\sf (b)} we potentially derive additional
$1$-arcs, whence $j\ge b$.\\
{\it Claim $2$.}
\begin{equation}\label{E:fam}
\vert \mathcal{F}\vert =\binom{n-b}{b}f_k(n-2b,\ell)\ .
\end{equation}
Let $\lambda(n,b)$ denote the number of ways to select $b$ $1$-arcs
over $\{1,\dots,n\}$. We observe that $\lambda(n,b)=\binom{n-b}{b}$.
Identifying the two incident vertices of an $1$-arc we conclude that we
can choose the $b$ $1$-arcs in $\binom{n-b}{b}$ ways.
Obviously, $\ell$ isolated vertices can be obtained in $\binom{n-2b}
{\ell}$ different ways and it remains to select an arbitrary
$k$-noncrossing digraph with exactly $\ell$ isolated points over $n-2b$
vertices. The number of those is given by $f(n-2b,\ell)$ which we can
compute via Lemma~\ref{L:walks}, whence eq.~(\ref{E:fam}) and Claim $2$ is
proved.\\
In view of the fact that any of the $k$-noncrossing digraphs can introduce
additional $1$-arcs we set
$$
\mathcal{F}(j)=\{\theta\in\mathcal{F} \mid \theta\ \text{\rm has
                     exactly $j$ $1$-arcs} \} \ .
$$
Obviously, $\mathcal{F}=\dot\bigcup_{j \ge b}\mathcal{F}(j)$.
Suppose $\theta \in \mathcal{F}(j)$. According to Claim $1$, $\theta\in
\mathcal{G}_{n,k}(\ell,j)$ and furthermore $\theta$ occurs with
multiplicity $\binom{j}{b}$ in $\mathcal{F}$ since by construction any
$b$-element subset of the $j$ $1$-arcs is counted respectively in
$\mathcal{F}$. Therefore we have
\begin{equation} \label{E:well}
\vert \mathcal{F}(j)\vert = \binom{j}{b}{\sf G}_k(n,\ell,j)
\end{equation}
and
\begin{eqnarray*}
\sum_{j \ge b}\binom{j}{b}{\sf G}_k(n,\ell,j)  =
\sum_{j \ge b} \vert \mathcal{F}(j)\vert =
\binom{n-b}{b} f_k(n-2b,\ell) \ ,
\end{eqnarray*}
whence eq.~(\ref{E:form1}). We next set
$F_k(x)=\sum_{j\ge 0}{\sf G}_k(n,\ell,j) \, x^j$.
Taking the $b$-th derivative and let $x=1$ we obtain
\begin{eqnarray}\label{E:u}
\frac{1}{b!} F_k^{(b)}(1) =
\sum_{j\ge b}\binom{j}{b}{\sf G}_k(n,\ell,j)1^{j-b}
 \ .
\end{eqnarray}
Claim~$2$ provides an interpretation of
the r.h.s.~of eq.~(\ref{E:u})
\begin{equation}
\sum_{j\ge b}\binom{j}{b}{\sf G}_k(n,\ell,j)\, 1^{j-b}=
\binom{n-b}{b}f_k(n-2b,\ell) \ .
\end{equation}
In order to connect $F_k(x)$ and $\frac{1}{b!} F^{(b)}(1)$ we consider the
Taylor expansion of $F_k(x)$ at $x=1$ and compute
\begin{eqnarray*}
F_k(x)  =  \sum_{b\ge 0} \frac{1}{b!} \, F^{(b)}(1) (x-1)^b
        =  \sum_{b=0}^{(n-\ell)/2}\binom{n-b}{b} f_k(n-2b,\ell)
              (x-1)^b \ .\\
\end{eqnarray*}
In view of ${\sf S}_k(n,\ell)={\sf G}_k(n,\ell,0)$ is
the constant term of $F_k(x)$, i.e.~$F_k(0)$, whence
\begin{equation}\label{E:y}
{\sf S}_k(n,\ell)=\sum_{b=0}^{(n-\ell)/2}\, (-1)^b\,
                  \binom{n-b}{b} f_k(n-2b,\ell) \ .
\end{equation}
It remains to prove eq~(\ref{E:sum}). Summing over all possible values
of isolated vertices, we get
\begin{align*}
S_{k}(n)&=\sum_{\ell=0}^{n}\sum_{b=0}^{(n-\ell)/2}(-1)^{b}{n-b
\choose b}f_{k}(n-2b,\ell) =
\sum_{b=0}^{\lfloor n/2\rfloor}(-1)^{b}{n-b \choose b}
\left\{\sum_{\ell=0}^{n-2b}f_{k}(n-2b,\ell)\right\}
\end{align*}
where $\sum_{\ell=0}^{n-2b}f_{k}(n-2b,\ell)$ is given by eq.~(\ref{E:ww2})
and the proof of the theorem is complete.
\end{proof}
\begin{tabular}{c|ccccccccccccccccccccc}
$n$ & \small{1} & \small{2} & \small{3} &\small{4} & \small{5} &
\small{6} & \small{7} & \small{8} & \small{9} & \small{10} &
\small{11} & \small{12} & \small{13} & \small{14} &
\small{15}\\
\hline $S_{3}(n)$ & \small{1} & \small{1} & \small{2} &\small{5} &
\small{13} &\small{36} & \small{105} & \small{321} & \small{1018} &
\small{3334} &\small{11216} &\small{38635} &\small{135835}
&\small{486337} &\small{1769500}\\
\end{tabular}

{\small {\bf Table 1.} The first $15$ numbers of $3$-noncrossing RNA
structures.}

A first implication of Theorem~\ref{T:cool1} is a new proof
for Waterman's formula \cite{Waterman:94a} for the number of
RNA secondary structures with exactly $k$ base pairs or
equivalently having $\ell=n-2k$ isolated vertices.

\begin{corollary}\label{C:k=2}
The number of RNA secondary structures having exactly $\ell$ isolated
vertices, ${\sf S}_2(n,\ell)$, is given by
\begin{equation}\label{E:Waterman-tree}
{\sf S}_2(n,\ell)  =
      \frac{2}{n-\ell}{\frac{n+\ell}{2} \choose \frac{n-\ell}{2} +1}
                    {\frac{n+\ell}{2}-1 \choose \frac{n-\ell}{2}-1} \  .
\end{equation}
Furthermore ${\sf S}_2(n,\ell)$ satisfies the recursion
\begin{equation}\label{E:rec2}
(n-\ell)(n-\ell+2)\cdot {\sf S}_{2}(n,\ell)\, -\, (n+\ell)(n+\ell-2)\cdot
{\sf S}_{2}(n-2,\ell)=0 \ .
\end{equation}
\end{corollary}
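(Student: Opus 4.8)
The plan is to derive both assertions directly from Theorem~\ref{T:cool1}. Setting $k=2$ in \eqref{E:da} and substituting $f_2(m,\ell)=\binom{m}{\ell}C_{(m-\ell)/2}$ from \eqref{E:2-3} (we may assume $n\equiv\ell\pmod 2$, otherwise both sides vanish), and writing $m=\tfrac{n-\ell}{2}$ for the number of base pairs and $p=\tfrac{n+\ell}{2}$, this yields
\begin{equation*}
{\sf S}_2(n,\ell)=\sum_{b=0}^{m}(-1)^{b}\binom{n-b}{b}\binom{n-2b}{\ell}\,C_{m-b}\ .
\end{equation*}
The main task is to evaluate this alternating binomial--Catalan sum in closed form. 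I would re-index by $j=m-b$, use $\binom{n-2b}{\ell}=\binom{\ell+2j}{2j}$ and $C_{j}=\tfrac{1}{j+1}\binom{2j}{j}$, so that $\binom{\ell+2j}{2j}\binom{2j}{j}=\frac{(\ell+2j)!}{\ell!\,(j!)^{2}}$, and then exploit that, since $p-m=\ell$, the remaining binomial factors as $\binom{n-b}{b}=\binom{p+j}{m-j}=\frac{(p+j)!}{(m-j)!\,(\ell+2j)!}$. The $(\ell+2j)!$'s cancel, the summand collapses to $\frac{(-1)^{j}}{\ell!}\cdot\frac{(p+j)!}{(m-j)!\,j!\,(j+1)!}$, its consecutive-term ratio is $\frac{(j+p+1)(j-m)}{(j+1)(j+2)}$, so ${\sf S}_2(n,\ell)=(-1)^{m}\,\frac{p!}{m!\,\ell!}\,{}_{2}F_{1}\!\big(p+1,-m;2;1\big)$.

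By the Chu--Vandermonde evaluation of a terminating $\,{}_{2}F_{1}$ at $1$, this equals $(-1)^{m}\frac{p!}{m!\,\ell!}\cdot\frac{(1-p)_{m}}{(2)_{m}}$, and rewriting the Pochhammer symbols as factorials gives
\begin{equation*}
{\sf S}_2(n,\ell)=\frac{p!\,(p-1)!}{m!\,(m+1)!\,(p-m)!\,(p-m-1)!}
 =\frac{2}{n-\ell}\binom{\tfrac{n+\ell}{2}}{\tfrac{n-\ell}{2}+1}\binom{\tfrac{n+\ell}{2}-1}{\tfrac{n-\ell}{2}-1}\ ,
\end{equation*}
which is exactly \eqref{E:Waterman-tree} (and is $0$ when $\ell=0$, consistent with every noncrossing perfect matching having an innermost $1$-arc). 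Alternatively one could verify this identity by comparing generating functions in $n$ for fixed $\ell$, but the hypergeometric route is the most direct.

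For the recursion \eqref{E:rec2} I would simply insert the closed form just obtained. Keeping $\ell$ fixed, the substitution $n\mapsto n-2$ is the shift $(p,m)\mapsto(p-1,m-1)$, and $(n-\ell)(n-\ell+2)=4m(m+1)$ while $(n+\ell)(n+\ell-2)=4p(p-1)$; writing $A(p,m)$ for the displayed closed form, \eqref{E:rec2} is therefore equivalent to $m(m+1)\,A(p,m)=p(p-1)\,A(p-1,m-1)$, and expanding the factorials gives $A(p,m)/A(p-1,m-1)=\frac{p(p-1)}{m(m+1)}$ at once. This step is routine --- equivalently, \eqref{E:rec2} is the creative-telescoping certificate of the sum \eqref{E:da} for $k=2$, which would also re-prove \eqref{E:Waterman-tree} after one checks the two smallest values of $n$. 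The only step demanding genuine care is the evaluation of the alternating sum, and within it the factorization $\binom{n-b}{b}=\frac{(p+j)!}{(m-j)!\,(\ell+2j)!}$ forced by $p-m=\ell$: this is precisely what reduces the sum to a single Chu--Vandermonde $\,{}_{2}F_{1}$, after which everything is elementary bookkeeping.
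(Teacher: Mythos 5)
Your proposal is correct and follows essentially the same route as the paper: the closed form is obtained by directly evaluating the alternating sum from Theorem~\ref{T:cool1} with $k=2$, and your Chu--Vandermonde evaluation of the terminating ${}_2F_1$ is the same Vandermonde convolution the paper uses after rewriting $\binom{n-b}{k-b+1}$ with negated upper index. The only (harmless) divergence is at the recursion \eqref{E:rec2}, which you verify by substituting the closed form, whereas the paper obtains it independently via Zeilberger's algorithm and uses it for a second proof of \eqref{E:Waterman-tree} by comparison with Waterman's $s(n,h)$; your direct ratio check $A(p,m)/A(p-1,m-1)=p(p-1)/(m(m+1))$ is a perfectly valid and more elementary way to establish the stated recursion.
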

\begin{proof}
We actually give two independent proofs of eq~(\ref{E:Waterman-tree}):
the first being a direct computation based on eq.~(\ref{E:da}) and the
second using the recursion in eq.~(\ref{E:rec2}) derived by Zeilberger's
algorithm \cite{Zeilberger:96a}. Let $\frac{n-\ell}{2}=k$ we compute
\begin{align*}
{\sf S}_2(n,\ell)
       &=\sum_{b=0}^{(n-\ell)/2}(-1)^b\binom{n-b}{b}\binom{n-2b}{\ell}
                   \,  C_{\frac{n-\ell-2b}{2}}\\
&=\sum_{b=0}^{k}(-1)^{b}\frac{(n-b)!}{b!(n-2k)!}\cdot
\frac{1}{(k-b+1)!(k-b)!}\\
&=\frac{(n-k-1)!}{(n-2k)!\cdot
k!}\sum_{b=0}^{k}(-1)^{b}\frac{(n-b)!}{(k-b+1)!(n-k-1)!}
\frac{k!}{b!(k-b)!}\\
&=\frac{1}{n-k}{n-k \choose k}\sum_{b=0}^{k}(-1)^{b}{k \choose
b}{n-b \choose k-b+1}\\
&=(-1)^{k+1}\frac{1}{n-k}{n-k \choose k}\sum_{b=0}^{k}{k \choose
b}{k-n \choose k-b+1}\\
&=\frac{1}{n-k}{n-k \choose k+1}{n-k \choose k}\\
&=\frac{1}{k}{n-k \choose k+1}{n-k-1 \choose k-1}.
\end{align*}
As for the second proof we use ${\sf S}_2(n,\ell)=\sum_{b=0}^{(n-\ell)/2}
(-1)^b\binom{n-b}{b}f_2(n-2b,\ell)$ as
the input for Zeilberger's algorithm \cite{Zeilberger:96a} and obtain that
${\sf S}_{2}(n,\ell)$ satisfies the recursion formula
\begin{equation}\label{E:rec21}
(n-\ell)(n-\ell+2)\cdot {\sf S}_{2}(n,\ell)-(n+\ell)(n+\ell-2)\cdot
{\sf S}_{2}(n-2,\ell)=0 \ .
\end{equation}
Using a bijection between RNA secondary structures and linear trees
Waterman computed in \cite{Waterman:94a} the number of RNA secondary
structures with exactly $h$ arcs, $s(n,h)$
\begin{equation}\label{E:water}
s(n,h)=\frac{1}{h}{n-h \choose h+1}{n-h-1 \choose h-1} \ .
\end{equation}
It follows by direct computation that ${\sf S}_2(n,n-2h)=s(n,h)=
\frac{1}{h}{n-h \choose h+1}{n-h-1 \choose h-1}$ satisfies the
recursion in eq.~(\ref{E:rec21}), from which we can conclude
${\sf S}_2(n,\ell) =s(n,(n-\ell)/2)$.
\end{proof}

\begin{corollary}\label{C:k=3}
The number of $3$-noncrossing RNA structures having exactly $\ell$
isolated vertices, ${\sf S}_3(n,\ell)$, is given by
\begin{equation}\label{E:k=3ex}
{\sf S}_3(n,\ell)  =  \sum_{b=0}^{(n-\ell)/2}(-1)^b\binom{n-b}{b}
 \binom{n-2b}{\ell} \left[C_{\frac{n-\ell-2b}{2}}\,
C_{\frac{n-\ell-2b}{2}+2}-C_{\frac{n-\ell-2b}{2}+1}^2\right] \ .
\end{equation}
\end{corollary}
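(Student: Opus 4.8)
The plan is to read the claim off directly from Theorem~\ref{T:cool1}. By eq.~(\ref{E:da}) with $k=3$ we have
\begin{equation*}
{\sf S}_3(n,\ell)=\sum_{b=0}^{(n-\ell)/2}(-1)^b\binom{n-b}{b}\,f_3(n-2b,\ell),
\end{equation*}
so the entire task reduces to inserting a closed form for the numbers $f_3(m,\ell)$ of $3$-noncrossing digraphs over $m$ vertices with exactly $\ell$ isolated vertices.

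That closed form is recorded in eq.~(\ref{E:2-3}): $f_3(m,\ell)=\binom{m}{\ell}\bigl[C_{(m-\ell)/2+2}\,C_{(m-\ell)/2}-C_{(m-\ell)/2+1}^2\bigr]$. Substituting $m=n-2b$ and using $(n-2b-\ell)/2=\tfrac{n-\ell-2b}{2}$ gives
\begin{equation*}
f_3(n-2b,\ell)=\binom{n-2b}{\ell}\Bigl[C_{\frac{n-\ell-2b}{2}}\,C_{\frac{n-\ell-2b}{2}+2}-C_{\frac{n-\ell-2b}{2}+1}^2\Bigr],
\end{equation*}
and plugging this back into the alternating sum produces the asserted identity verbatim. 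One should note that the summation range $0\le b\le (n-\ell)/2$ is precisely the range in which $n-\ell-2b\ge 0$, so every Catalan index is a nonnegative integer (with the convention that $f_3(n-2b,\ell)=0$ when $n-\ell-2b$ is odd), and no well-definedness issue arises.

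The only step carrying real mathematical content behind this bookkeeping is the justification of eq.~(\ref{E:2-3}) for $f_3$, which I would obtain --- if one does not simply cite it --- from Lemma~\ref{L:walks} specialized to $k-1=2$ and $a=b=(2,1)$: there $\sum_{m\ge 1}f_3(m,0)x^m/m!$ equals the $2\times 2$ determinant $\det[I_{i-j}(2x)-I_{i+j}(2x)]_{i,j=1}^2$, and this determinant, after expanding the hyperbolic Bessel functions and identifying the relevant Taylor coefficients with differences of products of Catalan numbers, yields $f_3(m,0)=C_{m/2+2}C_{m/2}-C_{m/2+1}^2$; combining with $f_3(m,\ell)=\binom{m}{\ell}f_3(m-\ell,0)$ from eq.~(\ref{E:ww0}) then gives the general $f_3(m,\ell)$. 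I expect this determinant evaluation, together with the attendant Bessel-coefficient identities, to be the main obstacle; once eq.~(\ref{E:2-3}) is in hand, the passage to ${\sf S}_3(n,\ell)$ is purely formal.
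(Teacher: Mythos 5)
Your proposal is correct and coincides with the paper's own (implicit) derivation: the corollary is obtained exactly by specializing eq.~(\ref{E:da}) of Theorem~\ref{T:cool1} to $k=3$ and substituting the closed form $f_3(m,\ell)=\binom{m}{\ell}\bigl[C_{(m-\ell)/2+2}C_{(m-\ell)/2}-C_{(m-\ell)/2+1}^2\bigr]$ from eq.~(\ref{E:2-3}) with $m=n-2b$. Your further remark that the real content lives in eq.~(\ref{E:2-3}), via Lemma~\ref{L:walks} and eq.~(\ref{E:ww0}), matches how the paper organizes the argument.
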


Using the expression of Corollary~\ref{C:k=3} for ${\sf S}_3(n,\ell)$ as
an input for Zeilberger's algorithm \cite{Zeilberger:96a} we derive

\begin{corollary}\label{C:recursion}
 The number of $3$-noncrossing RNA structures having
exactly $\ell$ isolated vertices, ${\sf S}_3(n,\ell)$, satisfies the
$4$-term recursion
\begin{align}
{\sf p}_1(n)\, {\sf S}_{3}(n-6,\ell)-{\sf p}_2(n)\, {\sf S}_{3}(n-4,\ell)-
{\sf p}_3(n){\sf S}_{3}(n-2,\ell)+{\sf p}_4(n)\,{\sf S}_{3}(n,\ell)=0 \ ,
\end{align}
where the coefficients ${\sf p}_1(n,\ell)$, ${\sf p}_2(n,\ell)$
${\sf p}_3(n,\ell)$ and ${\sf p}_4(n,\ell)$ are given by
\begin{eqnarray*}
{\sf p}_1(n,\ell) & = &
\frac{1}{2}n(n-1)(n-10+\ell)(n-4+\ell)(n-8+\ell) \\
{\sf p}_2(n,\ell) & = & \frac{1}{2}n(n-3)(13n^{3}-126n^{2}+13n^{2}\ell-
              88n\ell+392n+3n\ell^{2}+216\ell-384-42\ell^{2}+3\ell^{3}) \\
{\sf p}_3(n,\ell) & = & (n-1)(\frac{1}{2}n-2)(13n^{3}-30n^{2}-13n^{2}\ell+
             8n+16n\ell+3n\ell^{2}+30\ell^{2}-72\ell-3\ell^{3}) \\
{\sf p}_4(n,\ell) & = &
(n-3)(\frac{1}{2}n-2)(n-\ell)(n-\ell+6)(n-\ell+4) \ .
\end{eqnarray*}
\end{corollary}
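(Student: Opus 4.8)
The plan is to prove the recurrence by creative telescoping applied to the closed form provided in Corollary~\ref{C:k=3}, treating $\ell$ throughout as a parameter. Since ${\sf S}_3(n,\ell)=0$ unless $n-\ell$ is even, we may assume $n=\ell+2j$ with $j\ge 0$, and then Corollary~\ref{C:k=3} reads ${\sf S}_3(n,\ell)=\sum_{b\ge 0}F(j,b)$ with
$$
F(j,b)=(-1)^b\binom{\ell+2j-b}{b}\binom{\ell+2j-2b}{\ell}\bigl(C_{j-b}C_{j-b+2}-C_{j-b+1}^2\bigr),
$$
a sum with support $0\le b\le j$. The first step is to record that $F(j,b)$ is amenable to Zeilberger-type summation: the binomial factors and each Catalan number $C_{j-b},C_{j-b+1},C_{j-b+2}$ are proper hypergeometric in $(j,b)$, so each of the two sums
$$
\Sigma_1=\sum_b(-1)^b\binom{\ell+2j-b}{b}\binom{\ell+2j-2b}{\ell}C_{j-b}C_{j-b+2},\qquad
\Sigma_2=\sum_b(-1)^b\binom{\ell+2j-b}{b}\binom{\ell+2j-2b}{\ell}C_{j-b+1}^2
$$
has a proper hypergeometric summand, while ${\sf S}_3(n,\ell)=\Sigma_1-\Sigma_2$; alternatively one notes directly that $F(j,b)$ is D-finite in $(j,b)$, being a difference of two hypergeometric terms.

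The second step is the actual telescoping. I would run Zeilberger's algorithm \cite{Zeilberger:96a} on $\Sigma_1$ and on $\Sigma_2$ separately in the variable $j$, obtaining two linear recurrence operators in the shift $E\colon j\mapsto j+1$ (equivalently $n\mapsto n+2$) annihilating $\Sigma_1$ and $\Sigma_2$; the operator annihilating the difference is then the least common left multiple of the two, and after clearing denominators and stripping any spurious polynomial right factor one is left with an order-$3$ operator. The claim is that this operator is, up to a nonzero polynomial in $n,\ell$, equal to
$$
{\sf p}_4(n,\ell)\,E^3-{\sf p}_3(n,\ell)\,E^2-{\sf p}_2(n,\ell)\,E+{\sf p}_1(n,\ell),
$$
applied to the sequence $n\mapsto{\sf S}_3(n-6,\ell)$; rewriting $E^iF$ in terms of ${\sf S}_3$ at arguments $n-6,n-4,n-2,n$ reproduces exactly the asserted $4$-term relation. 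Equivalently, and more cleanly, one may apply the D-finite version of creative telescoping directly to $F(j,b)$ to get a relation $\sum_{i=0}^{3}q_i(j)F(j-i,b)=G(j,b+1)-G(j,b)$ with $G$ of the same finite support, and then sum over $b$: the right-hand side collapses and yields the homogeneous recurrence for ${\sf S}_3(n,\ell)$ itself.

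To turn this into a self-contained proof rather than a black-box invocation, the final step is verification: one records the rational certificate $R(j,b)=G(j,b)/F(j,b)$ output by the algorithm and checks, by clearing denominators, the single polynomial identity $\sum_{i=0}^{3}q_i(j)F(j-i,b)=F(j,b+1)R(j,b+1)-F(j,b)R(j,b)$ in $j,b,\ell$; one then checks that the leading coefficient ${\sf p}_4(n,\ell)$ is not identically zero and that the recurrence, together with the three initial values obtained by direct evaluation of Corollary~\ref{C:k=3} (e.g.\ for $n-\ell\in\{0,2,4\}$), genuinely determines ${\sf S}_3(n,\ell)$. I expect the only real obstacle to be computational bookkeeping: confirming that the holonomic rank here is exactly $3$ so that the LCLM/telescoping output is the stated $4$-term relation and not something of higher order (which may require exhibiting and cancelling a common right factor of the two component operators), carefully carrying the parameter $\ell$ and the parity substitution $n=\ell+2j$ through every step, and matching the rather unwieldy polynomials ${\sf p}_1,\dots,{\sf p}_4$ — none of which involves any genuine difficulty beyond careful symbolic computation.
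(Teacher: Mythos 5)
Your proposal takes essentially the same route as the paper: the authors obtain the $4$-term recursion precisely by feeding the closed-form sum of Corollary~\ref{C:k=3} into Zeilberger's algorithm \cite{Zeilberger:96a}, exactly as you describe. Your additional remarks on the certificate check, the parity substitution $n=\ell+2j$, and the initial values merely flesh out the same creative-telescoping argument, so there is no substantive difference in approach.
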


Theorem~\ref{T:cool1} immediately allows us to
derive the generating function for circular $k$-noncrossing RNA
structures.
Circular RNA structures are $k$-noncrossing digraphs without arcs of the
form $(1,n)$, representing molecular structures over circular sequences.
In circular sequences the arc $(n,1)$ is considered a bond of the primary
sequence and consequently does not occur as an arc in the corresponding
digraph representation. Suppose $k\ge 2$ and let $\mathcal{G}^{(c)}_{n,k}
(\ell,j)$ be the set of all $k$-noncrossing digraphs having exactly $\ell$
isolated points and exactly $j$ $1$-arcs, where a $1$-arc is an arc of the
form $(i,i+1)$, where $i$ is considered modulo $n$.
We set ${\sf G}^{(c)}_{k}(n,\ell,j)=\vert \mathcal{G}^{(c)}_{n,k}
(\ell,j)\vert$.

\begin{theorem}\label{T:circular}
Let $k\in\mathbb{N}$, $k\ge 2$, then the number of circular
$k$-noncrossing RNA structures, with exactly $\ell$ isolated vertices
${\sf S}^{(c)}_k(n,\ell)$, is given by
\begin{equation}\label{E:dacg}
{\sf S}^{(c)}_k(n,\ell) = \sum_{b=0}^{(n-\ell)/2}
(-1)^b \left[\binom{(n-2)-(b-1)}{b-1}+\binom{n-b}{b}\right]
f_k(n-2b,\ell)  \  .
\end{equation}
where $\lambda^{(c)}(n,b)(1,0)=0$, $\lambda^{(c)}(n,b)(1,1)=1$,
$\lambda^{(c)}(2,0)=0$ and  $\lambda^{(c)}(2,2)=1$.
\begin{equation}\label{E:sumcirc}
{\sf S}^{(c)}_k(n)
=\sum_{b=0}^{\lfloor n/2\rfloor}(-1)^{b}\left[\binom{(n-2)-(b-1)}{b-1}+
\binom{n-b}{b}\right]\left\{\sum_{\ell=0}^{n-2b}f_{k}(n-2b,\ell)\right\}
\end{equation}
where $\sum_{\ell=0}^{n-2b}f_{k}(n-2b,\ell)$ is given by eq.~{\rm
(\ref{E:ww2})}.
\end{theorem}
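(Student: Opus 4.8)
The plan is to mimic the proof of Theorem~\ref{T:cool1} essentially verbatim, replacing the count $\lambda(n,b)=\binom{n-b}{b}$ of ways to place $b$ linear $1$-arcs over $\{1,\dots,n\}$ by the corresponding count $\lambda^{(c)}(n,b)$ for the circular case, and then verifying that this circular count equals the bracketed expression $\binom{(n-2)-(b-1)}{b-1}+\binom{n-b}{b}$. First I would set up, for a fixed number $\ell$ of isolated points, the family $\mathcal{F}^{(c)}$ of circular $k$-noncrossing digraphs obtained by: (a) selecting $b$ circular $1$-arcs (arcs $(i,i+1)$ with $i$ taken mod $n$, but \emph{excluding} the arc $(n,1)$ since in the circular setting that pair is part of the backbone), and (b) placing an arbitrary $k$-noncrossing digraph with exactly $\ell$ isolated points on the remaining $n-2b$ vertices. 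Exactly as in Claim~1 of Theorem~\ref{T:cool1}, a $1$-arc cannot belong to a $k$-set of mutually crossing arcs, so every element of $\mathcal{F}^{(c)}$ lies in $\mathcal{G}^{(c)}_{n,k}(\ell,j)$ for some $j\ge b$, and each such $\theta$ occurs with multiplicity $\binom{j}{b}$. This yields $\sum_{j\ge b}\binom{j}{b}{\sf G}^{(c)}_k(n,\ell,j)=\lambda^{(c)}(n,b)\,f_k(n-2b,\ell)$.

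Next I would compute $\lambda^{(c)}(n,b)$, the number of ways to choose $b$ pairwise disjoint (non-adjacent) edges of the path on vertices $1,2,\dots,n$ \emph{together with} the extra edge $(n,1)$ allowed in the matching but not allowed to be selected — in other words, the number of $b$-matchings of the path $P_n$ that we may form, but where one additionally forbids using the ``wrap-around'' arc as a chosen $1$-arc. The cleanest way: split according to whether vertices $1$ and $n$ are both ``free'' in the configuration or not. A direct case analysis of an independent set of $b$ edges in a cycle-like structure gives, after identifying incident vertices of each chosen $1$-arc, the two contributions $\binom{n-b}{b}$ (the purely linear placements, as in the non-circular case) and $\binom{(n-2)-(b-1)}{b-1}=\binom{n-b-1}{b-1}$ (placements that ``use'' the wrap-around position, contributing one fewer free slot); summing gives $\lambda^{(c)}(n,b)=\binom{n-b}{b}+\binom{n-b-1}{b-1}$. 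One checks the stated boundary data $\lambda^{(c)}(n,b)(1,0)=0$, $\lambda^{(c)}(n,b)(1,1)=1$, $\lambda^{(c)}(2,0)=0$, $\lambda^{(c)}(2,2)=1$ directly against this formula. (This is the standard fact that the number of $b$-matchings of the cycle $C_n$ is $\frac{n}{n-b}\binom{n-b}{b}=\binom{n-b}{b}+\binom{n-b-1}{b-1}$.)

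With $\lambda^{(c)}(n,b)$ in hand, the rest is a copy of the argument in Theorem~\ref{T:cool1}: set $F^{(c)}_k(x)=\sum_{j\ge 0}{\sf G}^{(c)}_k(n,\ell,j)\,x^j$, observe that $\tfrac{1}{b!}F^{(c)}_k{}^{(b)}(1)=\sum_{j\ge b}\binom{j}{b}{\sf G}^{(c)}_k(n,\ell,j)=\lambda^{(c)}(n,b)f_k(n-2b,\ell)$, expand $F^{(c)}_k$ in a Taylor series at $x=1$, and read off the constant term ${\sf S}^{(c)}_k(n,\ell)={\sf G}^{(c)}_k(n,\ell,0)=F^{(c)}_k(0)=\sum_{b}(-1)^b\lambda^{(c)}(n,b)f_k(n-2b,\ell)$, which is precisely eq.~(\ref{E:dacg}). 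Summing over $\ell$ and invoking eq.~(\ref{E:ww2}) gives eq.~(\ref{E:sumcirc}), exactly as at the end of the proof of Theorem~\ref{T:cool1}.

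The main obstacle is the combinatorial identity $\lambda^{(c)}(n,b)=\binom{n-b}{b}+\binom{n-b-1}{b-1}$ and, more subtly, making precise what ``a circular $1$-arc'' is and why excluding $(n,1)$ is the right convention: the inclusion--exclusion over $b$ must count each chosen set of $1$-arcs consistently with the decomposition step (b), and one has to be careful that the $n-2b$ ``remaining'' vertices in the circular case still support an \emph{ordinary} (linear) $k$-noncrossing digraph, so that $f_k(n-2b,\ell)$ is the correct factor — this is fine because contracting a $1$-arc never creates a crossing and the linear order on the surviving vertices is inherited. Once the edge-count $\lambda^{(c)}$ is pinned down, everything else is formally identical to Theorem~\ref{T:cool1} and requires no new idea.
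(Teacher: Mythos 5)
Your proposal follows the paper's proof essentially verbatim: the same family construction, the same multiplicity and Taylor-expansion argument carried over from Theorem~\ref{T:cool1}, and the same identification of $\lambda^{(c)}(n,b)$ with the number of $b$-matchings of the cycle $C_n$, split according to whether the wrap-around edge is used, giving $\binom{n-b}{b}+\binom{n-b-1}{b-1}$. One caveat: in step (a) you say the arc $(n,1)$ is \emph{excluded} from the selection, which contradicts your later (correct) computation — for the inclusion--exclusion to eliminate structures containing $(1,n)$, which the paper's definition of circular structures forbids, that arc must be among the selectable $1$-arcs, exactly as the paper's case split (``either the arc $(n,1)$ is selected \dots or $(n,1)$ is not selected'') makes explicit; with that wording fixed the argument is the paper's.
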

\begin{proof}
For circular RNA structures the $1$-arcs are considered modulo $n$.
Again we derive a family $\mathcal{F}$ of $\mathcal{G}_{n,k}$-digraphs,
having exactly $\ell$ isolated points and at least $b$ $1$-arcs.
We select {\sf (a)} $b$ $1$-arcs, and {\sf (b)} an arbitrary
$k$-noncrossing digraph with exactly $\ell$ isolated points over the
remaining $n-2b$ vertices. In complete analogy we derive that each element
$\theta\in \mathcal{F}$ is contained in $\mathcal{G}^{(c)}_{n,k}(\ell,j)$
for some $j\ge b$.
Let $\lambda^{(c)}(n,b)$, denote the number of ways to select $b$
$1$-arcs over $\{1,\dots,n\}$ including the arc $(n,1)$. Then
$\lambda^{(c)}(n,b)$ is given by
\begin{equation}\label{E:circular}
\lambda^{(c)}(n,b)=
\binom{(n-2)-(b-1)}{b-1}+\binom{n-b}{b} \ ,
\end{equation}
where $\lambda^{(c)}(n,b)(1,0)=0$, $\lambda^{(c)}(n,b)(1,1)=1$,
$\lambda^{(c)}(2,0)=0$ and  $\lambda^{(c)}(2,2)=1$.
Indeed, either the arc $(n,1)$ is selected in which case we are left with
exactly $\binom{(n-2)-(b-1)}{b-1}$ ways to select the remaining $1$-arcs
or $(n,1)$ is not selected, in which case according to
Theorem~\ref{T:cool1}
there are exactly $\binom{n-b}{b}$ ways to select the $1$-arcs. Therefore
we obtain
\begin{equation}\label{E:for}
\sum_{j\ge b}\binom{j}{b}\, {\sf G}^{(c)}_k(n,\ell,j)=
\left[\binom{(n-2)-(b-1)}{b-1}+\binom{n-b}{b}\right]
\, f_k(n-2b,\ell) \ .
\end{equation}
In complete analogy to the argument in Theorem~\ref{T:cool1} we can
conclude
\begin{equation}
{\sf S}^{(c)}_k(n,\ell)=\sum_{b=0}^{(n-\ell)/2}\, (-1)^b\,
   \left[\binom{(n-2)-(b-1)}{b-1}+\binom{n-b}{b}\right]f_k(n-2b,\ell) \ .
\end{equation}
Eq.~(\ref{E:sumcirc}) follows analogously and the proof of the theorem is
complete.
\end{proof}

\section{Restricted RNA structures}\label{S:2-arcs}

We now generalize the ideas in Section~\ref{S:RNA} for the enumeration
of restricted RNA structures.
A restricted RNA structure is an RNA structure without any $2$-arcs,
i.e.~arcs of the form $(i,i+2)$.
In this case we need the condition $k>2$
instead of $k\ge 2$, since our construction can produce $2$-sets of
mutually crossing arcs.
Let $\mathcal{G}_{n,k}(\ell,j_1,j_2)$ be the set of all $k$-noncrossing
digraphs having exactly $\ell$ isolated points and exactly $j_1$ and
$j_2$ $1$-and $2$-arcs. We set
$
{\sf G}_{k}(n,\ell,j_1,j_2)=\vert \mathcal{G}_{n,k}(\ell,j_1,j_2)\vert
$.
In particular we have ${\sf G}_k(n,\ell,0,0)={\sf S}_k^{(r)}(n,\ell)$.

\begin{theorem}\label{T:cool2}
Let $k\in\mathbb{N}$, $k>2$. Then the numbers of restricted RNA structures
${\sf S}_k^{(r)}(n,\ell)$ and ${\sf S}^{(r)}_k(n)$ are given by
\begin{eqnarray}\label{E:da2}
{\sf S}_k^{(r)}(n,\ell) & = &
\sum_{b_{1}\ge 0,b_{2}\ge 0}(-1)^{b_{1}+b_{2}}
\lambda(n,b_{1},b_{2})f_{k}(n-2(b_{1}+b_{2}),\ell) \\
\label{E:da3}
{\sf S}_k^{(r)}(n) & = & \sum_{b_{1}\ge 0,b_{2}\ge 0}^{\lfloor n/2\rfloor}
(-1)^{b_{1}+b_{2}} \lambda(n,b_1,b_2)
\left\{\sum_{\ell=0}^{n-2(b_1+b_2)}f_{k}(n-2(b_{1}+b_{2}),\ell)\right\}\ .
\end{eqnarray}
Here $\lambda(n,b_1,b_2)$ satisfies the recursion
\begin{equation}\label{E:hh}
\lambda(n,b_1,b_2)  =  \lambda(n-2,b_1-1,b_2)
+\lambda(n-1,b_1,b_2) +\lambda(n-4,b_1,b_2-2) +
\lambda(n-3,b_1,b_2-1)
\end{equation}
and the initial conditions for eq.~{\rm (\ref{E:hh})} are
$\lambda(n,0,0)=1$,
$\lambda(n,b_{1},0)={n-b_{1} \choose b_{1}}$, $\lambda(n,0,b_{2})=
\gamma(n,b_{2})$ and $\gamma(n,1)=0$ for $n=1$, $\gamma(n,1)=n-2$ for
$n\ge 2$ and $\gamma(n,2)=0$ for $n=2,3$.
\end{theorem}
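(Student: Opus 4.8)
The plan is to imitate the inclusion–exclusion argument of Theorem~\ref{T:cool1}, but now with two ``forbidden'' arc types ($1$-arcs and $2$-arcs) treated simultaneously via a bivariate polynomial. First I would set up the generating polynomial in two variables,
$$
F_k(x_1,x_2)=\sum_{j_1,j_2\ge 0}{\sf G}_k(n,\ell,j_1,j_2)\,x_1^{j_1}x_2^{j_2},
$$
so that the desired quantity ${\sf S}_k^{(r)}(n,\ell)={\sf G}_k(n,\ell,0,0)$ is the constant term $F_k(0,0)$. The core combinatorial step is to identify, for each pair $(b_1,b_2)$, the quantity $\sum_{j_1\ge b_1,\,j_2\ge b_2}\binom{j_1}{b_1}\binom{j_2}{b_2}{\sf G}_k(n,\ell,j_1,j_2)$ with the cardinality of an explicitly constructed family $\mathcal F=\mathcal F(b_1,b_2)$ of $k$-noncrossing digraphs obtained by: (a) selecting $b_1$ designated $1$-arcs and $b_2$ designated $2$-arcs, and (b) placing an arbitrary $k$-noncrossing digraph with exactly $\ell$ isolated points on the remaining $n-2(b_1+b_2)$ vertices. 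As in Theorem~\ref{T:cool1}, each such configuration is genuinely $k$-noncrossing provided $k>2$, because a $1$-arc or $2$-arc can lie in a set of at most two mutually crossing arcs, so it never completes a $k$-set; this is exactly where the hypothesis $k>2$ (rather than $k\ge2$) is used, and it must be flagged carefully.

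Next I would show that $|\mathcal F(b_1,b_2)| = \lambda(n,b_1,b_2)\,f_k(n-2(b_1+b_2),\ell)$, where $\lambda(n,b_1,b_2)$ counts the number of ways to choose $b_1$ pairwise-disjoint $1$-arcs and $b_2$ pairwise-disjoint $2$-arcs on $\{1,\dots,n\}$ (disjoint from each other as well, since they occupy distinct vertices), and $f_k(n-2(b_1+b_2),\ell)$ counts the digraph placed in step (b). The multiplicity bookkeeping is the standard one: a fixed $\theta\in\mathcal G_{n,k}(\ell,j_1,j_2)$ arises from $\mathcal F(b_1,b_2)$ exactly $\binom{j_1}{b_1}\binom{j_2}{b_2}$ times, since any $b_1$-subset of its $1$-arcs and any $b_2$-subset of its $2$-arcs can be the designated ones. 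This yields
$$
\frac{1}{b_1!\,b_2!}\,\partial_{x_1}^{b_1}\partial_{x_2}^{b_2}F_k(1,1)
=\sum_{j_1\ge b_1,\,j_2\ge b_2}\binom{j_1}{b_1}\binom{j_2}{b_2}{\sf G}_k(n,\ell,j_1,j_2)
=\lambda(n,b_1,b_2)\,f_k(n-2(b_1+b_2),\ell).
$$
Taking the bivariate Taylor expansion of $F_k$ at $(1,1)$ and evaluating the constant term $F_k(0,0)$ gives the alternating sum in eq.~(\ref{E:da2}); summing over $\ell$ and invoking eq.~(\ref{E:ww2}) gives eq.~(\ref{E:da3}).

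The remaining point is the recursion~(\ref{E:hh}) for $\lambda(n,b_1,b_2)$, which I would prove by a deletion argument on the rightmost vertex $n$: either (i) $n$ is covered by a chosen $1$-arc, namely $(n-1,n)$, removing two vertices and one $1$-arc, contributing $\lambda(n-2,b_1-1,b_2)$; (ii) $n$ is uncovered, contributing $\lambda(n-1,b_1,b_2)$; (iii) $n$ is covered by a chosen $2$-arc $(n-2,n)$, in which case vertex $n-1$ cannot carry a $1$-arc but may start a $2$-arc — unwinding this case one step further produces the terms $\lambda(n-4,b_1,b_2-2)$ and $\lambda(n-3,b_1,b_2-1)$. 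The initial conditions ($\lambda(n,b_1,0)=\binom{n-b_1}{b_1}$ from the argument already used in Theorem~\ref{T:cool1}, and $\lambda(n,0,b_2)=\gamma(n,b_2)$ with the listed small-case values) pin the recursion down. The main obstacle, and the step needing the most care, is the third case of the recursion: a chosen $2$-arc $(n-2,n)$ ``straddles'' vertex $n-1$, so $\lambda$ does not simply factor over the two sides, and one has to track precisely which arcs incident to $n-1$ remain admissible — getting the bookkeeping here right is exactly what forces the two extra terms $\lambda(n-4,b_1,b_2-2)$ and $\lambda(n-3,b_1,b_2-1)$ and the somewhat delicate boundary values $\gamma(n,1)=n-2$, $\gamma(2,2)=\gamma(3,2)=0$.
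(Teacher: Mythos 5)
Your proposal is correct and follows essentially the same route as the paper: the same two-variable polynomial $F_k(x,y)$, the same overcounted family $\mathcal F(b_1,b_2)$ with multiplicities $\binom{j_1}{b_1}\binom{j_2}{b_2}$, the same use of $k>2$ to guarantee the construction stays $k$-noncrossing, and the same four-case recursion for $\lambda(n,b_1,b_2)$. The only (immaterial) difference is that you condition on the rightmost vertex $n$ while the paper conditions on vertex $1$; the case analysis is the mirror image of the paper's and yields the identical recursion.
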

\begin{proof}
Suppose $\lambda(n,b_1,b_2)$ is the number of ways to
select exactly $b_1$ $1$-arcs and $b_2$ $2$-arcs over $\{1,\dots,n\}$
vertices.\\
{\it Claim.} $\lambda(n,b_1,b_2)$ satisfies the recursion of
eq.~(\ref{E:hh}) with the respective initial conditions, and we have
\begin{equation}\label{E:form2}
\sum_{j_1\ge b_1,j_2\ge b_2}\binom{j_1}{b_1}\,\binom{j_2}{b_2}\,
{\sf G}_k(n,\ell,j_1,j_2)=
\lambda(n,b_1,b_2)\, f_k(n-2(b_1+b_2),\ell) \ .
\end{equation}
In analogy to the proof of Theorem~\ref{T:cool1} we derive a family
$\mathcal{F}$ of $\mathcal{G}_{n,k}$-digraphs, having exactly $\ell$
isolated points and at least $b_1$ and $b_2$ $1$-arcs and $2$-arcs,
respectively.
We first prove that this construction generates elements of
$\mathcal{G}_{n,k}(\ell,j_1,j_2)$ and then express
$\vert\mathcal{F}\vert$ via the numbers ${\sf G}_k(n,\ell,j_1,j_2)$.
We select {\sf (a)} $b_1$ $1$-arcs
and $b_2$ $2$-arcs and {\sf (b)} an arbitrary $k$-noncrossing digraph
over the remaining $n-2(b_1+b_2)$ vertices with exactly $\ell$ isolated
points. Let $\mathcal{F}$ be the family of digraphs obtained this way. \\
{\it Claim $1$.} Each element $\theta\in \mathcal{F}$ is contained in
$\mathcal{G}_{n,k}(\ell,j_1,j_2)$ for some $j_1\ge b_1$ and $j_2\ge b_2$.\\
To prove this we observe that any $1$-arc or $2$-arc can only cross at
most one other arc. Therefore $1$-arcs and $2$-arcs cannot be contained
in a set of more than $2$-mutually crossing arcs. As a result, for $k>2$
we generate digraphs that are $k$-noncrossing. Clearly $\theta$ has
exactly
$\ell$ isolated vertices and in step {\sf (b)} we potentially derive
additional $1$-arcs and $2$-arcs, whence $j_1\ge b_1$ and $j_2\ge b_2$,
respectively.\\
{\it Claim $2$.}
\begin{equation}\label{E:fam2}
\vert \mathcal{F}\vert =  \lambda(n,b_1,b_2)\, f_k(n-2(b_1+b_2),\ell)\ .
\end{equation}
We prove that the number of ways to select $1$ and $2$-arcs
satisfies the recursion in eq~(\ref{E:hh}) by induction on $n$.
For the induction step we distinguish the following cases:\\
{\sf Case 1.} The arc $(1,2)$ is selected. Then we have
$\lambda(n-2,b_1-1,b_2)$ ways to select $(b_1-1)$ $1$-arcs and $b_2$
$2$-arcs over the vertices $\{3,\dots,n\}$.\\
{\sf Case 2.} The arc $(1,2)$ not selected. Then we distinguish the
scenarios: $(1,3)$ is selected and $(1,3)$ is not selected. In the latter
case we have $\lambda(n-1,b_1,b_2)$ ways to choose $b_1$ $1$-arcs and
$b_2$ $2$-arcs over the vertices $\{2,\dots,n\}$.
Suppose $(1,3)$ is selected. Then we have either that $(2,4)$ is selected,
in which case we can select the remaining $b_1$ $1$-arcs and $b_2$
$2$-arcs over $\{5,\dots,n\}$ in exactly $\lambda(n-4,b_1,b_2-2)$
different ways.
In case $(2,4)$ is not selected we can freely choose $b_1$ $1$-arcs and
$(b_2-1)$ $2$-arcs over $\{4,\dots,n\}$ i.e.~there are
$\lambda(n-3,b_1,b_2-1)$ ways. Therefore we derive the recursion
$$
\lambda(n,b_1,b_2) =  \lambda(n-2,b_1-1,b_2)+
\lambda(n-1,b_1,b_2) +\lambda(n-4,b_1,b_2-2)+\lambda(n-3,b_1,b_2-1)  \,  .
$$
As for the intial conditions, we have are $\lambda(n,0,0)=1$,
$\lambda(n,b_{1},0)={n-b_{1} \choose b_{1}}$, $\lambda(n,0,b_{2})=
\gamma(n,b_{2})$ and $\gamma(n,1)=0$ for $n=1$, $\gamma(n,1)=n-2$ for
$n\ge 2$ and $\gamma(n,2)=0$ for $n=2,3$.
It remains to select an arbitrary $k$-noncrossing digraph with $\ell$
isolated vertices over $n-2(b_1+b_2)$ vertices. According to
Lemma~\ref{L:walks} the
latter number is given by $f_k(n-2(b_1+b_2),\ell)$,
whence eq.~(\ref{E:fam2}) and Claim $2$ is proved.
In view of the fact that any of the $k$-noncrossing digraphs over
$n-2(b_1+b_2)$ vertices can introduce additional $1$-arcs or
$2$-arcs, we set
$$
\mathcal{F}(j_1,j_2)=\{\theta\in\mathcal{F} \mid \theta\ \text{\rm has
                     exactly $j_1$ $1$-arcs and $j_2$ $2$-arcs} \} \ .
$$
Obviously, we have the partition $\mathcal{F}=\dot\bigcup_{j_1 \ge b_1,
\,j_2\ge b_2}\mathcal{F}(j_1,j_2)$. Suppose $\theta \in
\mathcal{F}(j_1,j_2)$.
According to Claim $1$, $\theta\in \mathcal{G}_{n,k}(\ell,j_1,j_2)$
and furthermore $\theta$ occurs with multiplicity $\binom{j_1}{b_1}$
$\binom{j_2}{b_2}$ in $\mathcal{F}$ since by construction any
$b_1$-element subset of the $j_1$ $1$-arcs and
$b_2$-element subset of the $j_2$ $2$-arcs is counted respectively in
$\mathcal{F}$. Therefore we have
\begin{equation} \label{E:well2}
\vert \mathcal{F}(j_1,j_2)\vert = \binom{j_1}{b_1}\binom{j_2}{b_2}
{\sf G}_k(n,\ell,j_1,j_2)
\end{equation}
and
\begin{eqnarray*}
\sum_{j_1 \ge b_1,\, j_2\ge b_2}
\binom{j_1}{b_1}\binom{j_2}{b_2}{\sf G}_k(n,\ell,j_1,j_2) & = &
\sum_{j_1 \ge b_1,\, j_2\ge b_2} \vert \mathcal{F}(j_1,j_2)\vert \\
& = &  \lambda(n,b_1,b_2)f_k(n-2(b_1+b_2),\ell) \ .
\end{eqnarray*}
We next set $F_{k}(x,y)=\sum_{j_{1}\ge 0}\sum_{j_{2} \ge 0}{\sf
G}_{k}(n,\ell,j_{1},j_{2})x^{j_{1}}y^{j_{2}}$.
Taking the $b_1$-th and $b_2$-th derivatives w.r.t.~$x$ and $y$ we obtain
\begin{eqnarray}\label{E:u2}
\frac{1}{b_1!}\frac{1}{b_2!} F_k^{(b_1,b_2)}(1) =
\sum_{j_1\ge b_1,\,j_2\ge b_2}
\binom{j_1}{b_1}\binom{j_2}{b_2}{\sf G}_k(n,\ell,j_1,j_2)\, 1^{j_1-b_1}
1^{j_2-b_2}
 \ .
\end{eqnarray}
Then we have
\begin{align*}
\sum_{j_{1},j_{2} \ge 0}
{\sf G}_{k}(n,\ell,j_{1},j_{2})x^{j_{1}}y^{j_{2}}
&=\sum_{b_{1}\ge 0,b_{2}\ge 0}\left[\sum_{j_{1}\ge b_{1},j_{2}\ge
b_{2}}{j_{1} \choose b_{1}}{j_{2} \choose
b_{2}}{\sf G}_{k}(n,\ell,j_{1},j_{2})\right](x-1)^{b_{1}}(y-1)^{b_{2}}\\
&=\sum_{b_{1}\ge 0,b_{2}\ge 0}
\lambda(n,b_{1},b_{2})\, f_k(n-2(b_1+b_2),\ell)\,(x-1)^{b_{1}}\,
(y-1)^{b_{2}} \ .\\
\end{align*}
By construction ${\sf G}(n,\ell,0,0)$ is the constant term of the
$F_{k}(x,y)$. That is, the number of k-noncrossing RNA structures
with $\ell$ isolated vertices and no 2-arcs is given by
\begin{equation}
{\sf G}(n,\ell,0,0)=
\sum_{b_{1}\ge 0,b_{2}\ge 0}(-1)^{b_{1}+b_{2}}
\lambda(n,b_{1},b_{2})f_{k}(n-2(b_{1}+b_{2}),\ell)
\end{equation}
and taking the sum over all $\ell$ eq.~(\ref{E:da3}) follows
$$
{\sf S}_k^{(r)}(n)  =  \sum_{b_{1}\ge 0,b_{2}\ge 0}^{\lfloor n/2\rfloor}
(-1)^{b_{1}+b_{2}} \lambda(n,b_1,b_2)
\left\{\sum_{\ell=0}^{n-2(b_1+b_2)}f_{k}(n-2(b_{1}+b_{2}),\ell)\right\}
\ ,
$$
where
$\left\{\sum_{\ell=0}^{n-2(b_1+b_2)}f_{k}(n-2(b_{1}+b_{2}),\ell)\right\}$
is given by eq.~(\ref{E:ww2}) and the proof of the theorem is complete.
\end{proof}

\begin{tabular}{c|ccccccccccccccccccccc}
$n$ & \small{1} & \small{2} & \small{3} &\small{4} & \small{5} &
\small{6} & \small{7} & \small{8} & \small{9} & \small{10} &
\small{11} & \small{12} & \small{13} & \small{14} &
\small{15}\\
\hline $S_{3}^{r}(n)$ & \small{1} & \small{1} & \small{1} &\small{2}
& \small{5} &\small{14} & \small{40} & \small{119} & \small{364} &
\small{1145} &\small{3688} &\small{12139} &\small{40734}
&\small{139071} &\small{482214}\\
\end{tabular}

{\small {\bf Table 3.} The first $15$ numbers of $3$-noncrossing restricted
        RNA structures.}

\section{Appendix}\label{S:appendix}
{\bf Proof of Theorem~\ref{T:tableaux}.}
Suppose we have two shapes $\mu^{i}\subsetneq \mu^{i-1}$ and $T_{i-1}$
is a standard Young tableau of shape $\mu^{i-1}$. We first observe that
there exists a unique $j$ and a unique $T_i$ such that $T_{i-1}$ is
obtained from $T_{i}$ by row-inserting $j$ with the RSK algorithm. \\
Suppose $\mu^{i-1}$ differs from $\mu^{i}$ in the first row. Then
$j$ is the element at the end of the first row in $T_{i-1}$.
Otherwise suppose $\ell$ is the row of the square being removed from
$T_{i-1}$. Remove the square and insert its element $x$ into the
$(\ell-1)$-th row at precisely the position, where the removed element
$y$ would push it down via the RSK-algorithm. That is $y$ is maximal
subject to $y<x$. Since each column is strictly increasing $y$
always exists. Iterating this process results in exactly one element
$j$ being removed from $T_i$ and a new filling of $\mu_{i-1}$,
i.e.~a unique tableau $T_{i-1}$. By construction, inserting
$j$ with the RSK algorithm produces $T_{i-1}$.\\
{\it Claim $1$.} There exists a bijection between the set of oscillating
tableaux of length $n$ and digraphs with vertices of degree $\le 1$.\\
Given an oscillating tableau
$(\mu^i)_{i=0}^n$ ($\mu^i$ differs from $\mu^{i-1}$ by at most one square),
we recursively define a sequence
$(G_{0},T_{0}),(G_{1},T_{1}),\ldots,(G_{n},T_{n})$,
where $G_i$ is a digraph and $T_{i}$
is a standard Young tableau. We define $G_0$ to be the digraph with
empty edge-set and $T_{0}$ to be the empty standard
Young tableau.
The tableau $T_{i}$ is obtained from $T_{i-1}$ and
the digraph $G_{i}$ is obtained from $G_{i-1}$ by the following
procedure: \\
{\sf 1.} {\sf (Insert origins)}
   For $\mu^{i}\supsetneq\mu^{i-1}$, then
   $T_{i}$ is obtained from $T_{i-1}$ by adding the entry $i$ in the
   square $\mu^{i}\backslash \mu^{i-1}$.\\
{\sf 2.} {\sf (Isolated vertices)}
For $\mu^{i} = \mu^{i-1}$ then set $T_i=T_{i-1}$\\
{\sf 3.} {\sf (Remove origins)} For $\mu^{i}\subsetneq \mu^{i-1}$, then
let $T_{i}$ be the unique standard Young tableau of shape $\mu^{i}$
and $j$ be the unique number such that $T_{i-1}$ is obtained from $T_{i}$
by row-inserting $j$ with the RSK algorithm. Then set
$E_{G_{i}}=E_{G_{i-1}}\cup \{(j,i)\}$.\\
Obviously, $G_n$ is a digraph, and the set of $i$ where $\mu^{i} =
\mu^{i-1}$ equals the set of isolated vertices of $G_n$.
By construction each entry $j$ is removed exactly once whence no edges
of the form $(j,i)$ and $(j,i')$ can be obtained. Therefore $G_n$ has
degree $\le 1$ and we have a well defined mapping
\begin{eqnarray*}
\beta\colon
\{(\mu_i)_{i=0}^n\mid (\mu_i)_{i=0}^n\,\text{\rm is an oscillating
tableau} \} \longrightarrow \{G_n\mid G_n \, \text{\rm is a digraph with
degree $\le 1$}\} \ .
\end{eqnarray*}
It is clear from the procedure that $G_n$ is a labeled graph and
$\beta$ is injective. To prove surjectivity we observe that each
digraph $G_n$ induces an oscillating tableau as follows. We set
$\mu_{G_n}^n=\varnothing$ and $T_n=\varnothing$. Starting from
vertex $i=n,n-1,\dots, 1,0$ we derive a sequence of Young tableaux
$(T_n,T_{n-1},\dots,T_0)$ as follows:\\
{\sf I.} If $i$ is an terminus of an $G_n$-arc $(j,i)$ add $j$ via the
         RSK-algorithm to $T_{i}$ set
         $\mu_{G_n}^{i-1}\supsetneq \mu_{G_n}^{i}$ to be the
         shape of $T_{i-1}$ (corresponds to {\sf (3)})\\
{\sf II.} If $i$ is an isolated $G_n$-vertex set $\mu_{G_n}^{i-1}=
         \mu_{G_n}^{i}$ (corresponds to {\sf (2)})\\
{\sf III.} If $i$ is the origin of an $G_n$-arc $(i,k)$ let
         $\mu_{G_n}^{i-1}\subsetneq \mu_{G_n}^{i}$ be the shape of
         $T_{i-1}$,
         the standard Young tableau obtained by removing the square
         containing $i$ (corresponds to {\sf (1)}).   \\
Then we have $\beta((\mu_{G_n})_0^n)=G_n$, whence $\beta$ is surjective.\\
{\it Claim $2$.} $G_n$ is $k$-noncrossing if and only if all shapes
$\mu^i$ in the oscillating tableau have less than $k$ rows.\\
From Claim 1 we know $\beta^{-1}(G_{n})=(\varnothing=\mu^{0},
\mu^{1},\ldots \mu^{n}=\varnothing)$, so it suffices to prove that
the maximal number of rows in the shape set $\beta^{-1}(G_{n})$ is
less than $k$. First we observe that the arcs $(i_{1},j_{1}),\ldots
(i_{\ell},j_{\ell})$ form a $\ell$-crossing of $G_{n}$ if and only
if there exists a tableau $T_{i}$ such that elements
$i_{1},i_{2},\ldots i_{\ell}$ are in the $\ell$ squares of $T_{i}$
and being deleted in increasing order $i_{1}<i_{2}<\ldots i_{\ell}$
afterwards. Next, we will obtain a permutation $\pi_{i}$ from the
entries in each tableau $T_{i}$ recursively as follows:\\
 {\sf 1.} If $T_{i-1}$ is obtained from $T_{i}$ by row-inserting $j$
          with the RSK algorithm, then $\pi_{i-1}=\pi_{i}j$. \\
{\sf 2.} If $T_i=T_{i-1}$, then $\pi_{i}=\pi_{i-1}$.\\
{\sf 3.} If $T_{i-1}$ is obtained from $T_{i}$ by deleting the entry
         $i$, then $\pi_{i-1}$ is obtained from $\pi_{i}$ by deleting
         $i$.\\
If $\pi=r_{1}r_{2}\ldots r_{t}$, then the entries being deleted
afterwards are in the order $r_{t},\ldots r_{2},r_{1}$.\\
Using the RSK algorithm w.r.t.~the permutation $\pi_{i}$, the resulting
row-inserting Young tableau is exactly $T_{i}$. We prove this by
induction in reverse order of the oscillating tableau. It is trivial
for the case $i=n$. Suppose it holds for $j$, $1\le j\le n$. Consider
the above three cases: inserting an element, doing nothing and deleting
an element. In the first case, the assertion is implied the RSK
algorithm in the construction of the oscillating tableau. In the second
case, it holds by the induction hypothesis on step $j$.\\
Now it remains to consider the third case, that is, removing the
entry from $T_{j}$ to get $T_{j-1}$. Write $\pi_{j}=x_{1}x_{2}\ldots
x_{p}jy_{1}y_{2}\ldots y_{q}$ and $\pi_{j-1}=x_{1}x_{2}\ldots
x_{p}y_{1}y_{2}\ldots y_{q}$. In view of step {\sf 3} $j$ is larger than
elements $x_{1},x_{2},\ldots,x_{p},y_{1},\ldots y_{q}$. We need to
prove that the insertion tableau $S_{j-1}$ of $\pi_{j-1}$ by the RSK
algorithm is exactly the same as deleting the entry $j$ in $T_{j}$.
We proceed by induction on $q$. In the case $q=0$, $T_{j}$ is
obtained from $T_{j-1}$ by adding $j$ at the end of the first row.
Suppose the assertion holds for $q-1$, that is
$S_{j-1}(x_{1}x_{2}\ldots x_{p}y_{1}y_{2}\ldots
y_{q-1})=S_{j}(x_{1}x_{2}\ldots x_{p}jy_{1}y_{2}\ldots
y_{q-1})\setminus\text{\rm \fbox{$\,j\,$}}$.
Consider inserting $y_{q}$ into $S_{j-1}$,
via the RSK algorithm.
If the insertion track path never touches the position of $j$, then
$S_{j-1}(x_{1}x_{2}\ldots x_{p}y_{1}y_{2}\ldots
y_{q-1}y_{q})=S_{j}(x_{1}x_{2}\ldots x_{p}jy_{1}y_{2}\ldots
y_{q-1}y_{q})\setminus\text{\rm \fbox{$\,j\,$}}$. Otherwise, if
the insertion path
touched $j$ and pushed $j$ into the next row, then since $j$ is greater
than any other entry, $j$ must be moved to the end of next row and the
push process stops. Accordingly, the
insertion path in $S_{j-1}(x_{1}x_{2}\ldots x_{p}y_{1}y_{2}\ldots
y_{q-1})$ is the
same path as in $S_{j}(x_{1}x_{2}\ldots x_{p}jy_{1}y_{2}\ldots
y_{q-1})$ except the last step moving $j$ to a new position $j$, so
deleting $j$ will get $S_{j-1}(x_{1}x_{2}\ldots
x_{p}y_{1}y_{2}\ldots y_{q-1}y_{q})=S_{j}(x_{1}x_{2}\ldots
x_{p}jy_{1}y_{2}\ldots y_{q-1}y_{q})\setminus\text{\rm \fbox{$\,j\,$}}$.
According to
Schensted's Theorem, for any permutation $\pi$, assume $A$ is the
corresponding insertion Young tableau by using the RSK algorithm on
$\pi$. Then the length of the longest decreasing subsequences of
$\pi$ is the number of rows in $A$, whence the assertion. \\
Now we can prove Claim $2$. A diagraph is a $\ell$-crossing
if and only if there exists a $\pi_{i}$ which has decreasing
subsequence of length $\ell$. And the insertion Young tableau of
$\pi_{i}$ is exactly the same with the labeled oscillating tableau
$T_{i}$. According to Schensted's theorem, $\pi$ has a decreasing
sequence of length $\ell$ if and only if rows of $T_{i}$ is $\ell$.\\
 {\it Claim $3$.} There is a bijection
between oscillating tableaux with at most $k-1$ rows of length $n$
and walks with steps $\pm e_i,0$ which stay in the interior of
$C_{0}$ starting and ending at
$(k-1,k-2,\ldots,1)$.\\
This bijection is obtained by setting for $1\le \ell\le k-1$, $x_\ell$
to be the length of the $\ell$-th row. By definition of standard Young
tableaux, we have $\lambda_{1}\ge \lambda_{2}\ge \ldots \lambda_{n}$
i.e.~the length of each row is weakly decreasing. This property also
characterizes walks that stay within the Weyl-chamber $C_0$, i.e.~where
we have $x_{1}> x_{2}\ldots > x_{k-1}>0$ since a walk from
$(k-1,\ldots 2,1)$ to itself in the interior of $C_{0}$ is a translation
of a walk from the origin to itself in the region $x_{1}\ge x_{2}\ldots
\ge x_{k-1}\ge 0$.
In an oscillating tableau $\mu^i$ differs from $\mu^{i-1}$ by at most one
square and adding or deleting a square in the $\ell$-th row or doing
nothing corresponds to steps $\pm e_{\ell}$ and $0$, respectively.
Since the oscillating tableau is of empty shape, we have
walks from the origin to itself, whence Claim $3$ follows and the proof of
the Theorem is complete. $\square$\\


{\bf Proof of Lemma~\ref{L:touch}.}
To prove the lemma we can w.l.o.g.~assume $C=C_0=\{(x_1,\dots,x_{k-1})\mid
x_1> x_2 >\dots> x_{k-1}>0 \}$. Then the assertion is that every walk
having
steps $\pm e_i,0$ starting at $a=(k-1,k-2,\dots,1)$ that crosses from
inside ${C_0}$ into outside ${C_0}$ intersects one of the sub-spaces
$\langle e_1\rangle$ or $\langle e_j-e_{j-1}\rangle$ for $2\le j\le k-1$.
This is correct since to leave $C$ implies that there exists some $i$
such that $x_i\le x_{i+1}$. Let $s_j$ be minimal
w.r.t.~$a+\sum_h^{j+1}s_{h}\not\in C_0$. Since we have steps $\pm e_i,0$
we conclude $x_{k-1}=0$ or $x_j=x_{j-1}$ for some $2\le j\le k-1$, whence
the lemma. $\square$

{\bf Proof of Theorem~\ref{T:reflect}.}
Totally order the roots of $\Delta$. Let $\Gamma_n^-(a,b)$ be the
number of walks $\gamma$ from $a$ to $b$, $a,b\in\mathbb{Z}^{k-1}$ of
length $n$ using the steps $s$, $s\in \{\pm e_i,0\}$ such that
$\langle \gamma(s_r),\alpha\rangle=0$ for some $\alpha\in \Delta$
(i.e.~the walk intersects with the subspace $\langle
\alpha\rangle$).
According to Lemma~\ref{L:touch} every walk that crosses
from inside ${C}$ into outside ${C}$ touches a wall from which we
can draw two conclusions:
\begin{eqnarray}\label{E:split}
\Gamma_n(a,b) & = &\Gamma_n^{+}(a,b)+\Gamma_n^-(a,b) \\
\text{\rm $\beta\neq {\sf id}$}\quad \Longrightarrow \quad
\Gamma_n(\beta(a),b) & = &\Gamma_n^-(\beta(a),b) \ .
\end{eqnarray}
{\it Claim.} $\sum_{\beta\in {\sf B}_{k-1}}(-1)^{\ell(\beta)} \,
\Gamma_n^-(\beta(a),b)=0$.\\
Let $(s_1,\dots,s_n)$ be a walk from $\beta(a)$ to $b$.
By assumption there
exists some step $s_r$ at which we have $(\gamma_{\beta(a),b}(s_r),\alpha)
=0$, for $\alpha\in \Delta$. Let $\alpha^*$ be the largest root for which
we have $(\gamma_{\beta(a),b}(s_r),\alpha^*)=0$ and
$\beta_{\alpha^*}(x)=x-\frac{2\langle \alpha^*,
x\rangle }{\langle \alpha^*,\alpha^*\rangle}\alpha^*$
its associated reflection (eq.~(\ref{E:Co})).
We consider the walk
\begin{equation}
(\beta_{\alpha^*}(s_1),\dots,\beta_{\alpha^*}(s_r),s_{r+1},\dots,s_n)
\end{equation}
Now by definition $(\beta_{\alpha^*}(s_1),\dots,\beta_{\alpha^*}(s_r),
s_{r+1},\dots,s_n)$ starts at $(\beta_{\alpha^*}\circ \beta)(a)$ and has
sign $(-1)^{\ell(\beta)+1}$ since $\ell(\beta)+1=
\ell(\beta_{\alpha^*}\circ \beta)$.
Therefore to each
element $\gamma_{\beta(a),b}$ of $\Gamma_n^-(\beta(a),b)$ having sign
$(-1)^{\ell(\beta)}$ there exits a
$\gamma_{\beta_{\alpha^*}\beta(a),b}\in\Gamma_n^-(\beta_{\alpha^*}
\beta(a),b)$ with sign $(-1)^{\ell(\beta)+1}$ and the claim follows.
We immediately derive
\begin{eqnarray*}
\sum_{\beta\in {\sf B}_{k-1}}(-1)^{\ell(\beta)}\,
\Gamma_n(\beta(a),b) & = &
\Gamma_n(a,b) +
\sum_{\beta\in {\sf B}_{k-1},\beta\neq {\sf id}} (-1)^{\ell(\beta)} \,
\underbrace{\Gamma_n(\beta(a),b)}_{=\Gamma_n^-(\beta(a),b)}\\
& = & \Gamma_n^+(a,b) + \underbrace{\Gamma_n^{-}(a,b)+\sum_{\beta\in
{\sf B}_{k-1},\beta\neq {\sf id}}
                           (-1)^{\ell(\beta)} \, \Gamma_n^-(\beta(a),b)}_{
 \sum_{\beta\in {\sf B}_{k-1}}(-1)^{\ell(\beta)} \,
\Gamma_n^-(\beta(a),b)=0} \ ,
\end{eqnarray*}
whence the theorem. $\square$

{\bf Proof of Lemma~\ref{L:walks}.}
Let $u_i$, $1\le i\le k-1$ be transcendent variables and
$u=(u_i)_1^{k-1}$.
We define
$
{u}^{b-a}=\prod_{i=1}^{k-1} u_{i}^{b_{i}-a_{i}}
$.
Let $F(x,u)$ be a generating function, then $F(x,u)|_{{u}^{b-a}}$
equals the family of coefficients $a_{i}(u)$ at ${u}^{b-a}$ of
$\sum_{i\ge 0}a_i(u)x^i$. We first observe
$$
\Gamma_{n}(a,b)=
\left[1+\sum_{i=1}^{n}(u_{i}+u_{i}^{-1})\right]^n\bigg|_{{u}^{b-a}}
$$
The exponential generating function for $\Gamma_{n}(a,b)$ is
\begin{align*}
\sum_{n \ge 0}\Gamma_{n}(a,b)\frac{x^{n}}{n!}&=\sum_{n \ge 0}
\left[1+\sum_{i=1}^{k-1}(u_{i}+u_{i}^{-1})\right]^n
\bigg|_{{u}^{b-a}}\frac{x^{n}}{n!}\\
&=\sum_{n \ge 0}
\frac{[1+\sum_{i=1}^{k-1}(u_{i}+u_{i}^{-1})]^{n}}{n!}x^{n}
\bigg|_{{u}^{b-a}}\\
&=e^{x}\cdot{\sf exp}[x\sum_{i=1}^{k-1}(u_{i}+u_{i}^{-1})]
\bigg|_{{u}^{b-a}}\\
&=e^{x}\cdot \prod_{i=1}^{k-1}\left({\sf exp}(x(u_{i}+u_{i}^{-1}))
\bigg|_{{u_i}^{b_i-a_i}}\right)\\
\end{align*}
We furthermore derive
\begin{align*}
\sum_{n \ge0}\Gamma_{n}^{+}(a,b)\frac{x^{n}}{n!}& =e^{x}\sum_{\beta \in
B_{k-1}}(-1)^{l(\beta)}\prod_{i=1}^{k-1}
{\sf exp}(x(u_{i}+u_{i}^{-1}))\bigg|_{{u}^{b-\beta(a)}}\\
\end{align*}
and writing $\beta=\epsilon_{h}\,\sigma$ we obtain
\begin{align*}
\sum_{n \ge 0}\Gamma_{n}^{+}(a,b)\frac{x^{n}}{n!}&=
e^{x}\sum_{\sigma \in S_{k-1}}\sum_{h=1}^{k-1}\eta_{h}\,{\rm sgn}
(\sigma)\prod_{i=1}^{k-1}\left({\sf exp}(x(u_{i}+u_{i}^{-1}))
\bigg|_{u_{i}^{b_{i}-\epsilon_{h}a_{\sigma_{i}}}}\right)\\
&=e^{x}\sum_{\sigma \in S_{k-1}}{\rm sgn}(\sigma)\sum_{h=1}^{k-1}
\eta_h\prod_{i=1}^{k-1}\left({\sf exp}(x(u_{i}+u_{i}^{-1}))
\bigg|_{u_{i}^{b_{i}-\epsilon_{h}a_{\sigma_{i}}}}\right)\\
&=e^{x}\sum_{\sigma \in S_{k-1}}{\rm sgn}(\sigma) \left\{
\prod_{i=1}^{k-1}\left({\sf exp}(x(u_{i}+u_{i}^{-1}))
\bigg|_{u_{i}^{b_{i}-a_{\sigma_{i}}}}\right)\right.\\
& \qquad \qquad \qquad \qquad
\left.-\prod_{i=1}^{k-1}\left({\sf exp}(x(u_{i}+u_{i}^{-1}))
\bigg|_{u_{i}^{b_{i}+a_{\sigma_{i}}}}\right)\right\}
\\
&=e^{x}{\sf det}_{{k-1\times k-1}}
            [I_{a_{i}-b_{j}}(2x)-I_{a_{i}+b_{j}}(2x)]|_{i,j=1}^{k-1}\\
\end{align*}
where $\eta_h=\pm 1$ and the lemma follows. $\square$

{\bf Acknowledgments.}
We are grateful to Prof.~W.Y.C.~Chen and Prof.~Xin for helpful comments.
Many thanks to J.Z.M.~Gao and F.W.D.~Huang for their help and L.C.~Zuo
for her suggestions.
This work was supported by the 973 Project, the PCSIRT Project of the
Ministry of Education, the Ministry of Science and Technology, and
the National Science Foundation of China.

\bibliography{bi}

\begin{thebibliography}{10}

\bibitem{Science:05a}
Mapping {RNA} form and function.
\newblock {\em Science}, 2, 2005.

\bibitem{Loria:96a}
Loria A. and Pan T.
\newblock Domain structure of the ribozyme from eubacterial ribonuclease p.
\newblock {\em RNA}, 2:551--563, 1996.

\bibitem{Lindstroem:73a}
Lindstroem B.
\newblock On the vector representation of induced matroids.
\newblock {\em Bull. London Math. Soc.}, 5:85--90, 1973.

\bibitem{Tuerk:92}
Tuerk C., MacDougal S., and Gold L.
\newblock {RNA} pseudoknots that inhibit human immunodeficiency virus type 1
  reverse transcriptase.
\newblock {\em Proc. Natl. Acad. Sci. USA}, 89:6988--6992, 1992.

\bibitem{Chamorro:91a}
Parkin~N. Chamorro~M. and Varmus H.E.
\newblock An {RNA} pseudoknot and an optimal heptameric shift site are required
  for highly efficient ribosomal frameshifting on a retroviral messenger {RNA}.
\newblock {\em J. Proc Natl Acad Sci USA}, 89:713--717, 1991.

\bibitem{Konings:95a}
Konings D.A.M and Gutell R.R.
\newblock A comparison of thermodynamic foldings with comparatively derived
  structures of 16s and 16s-like r{RNA}s.
\newblock {\em RNA}, 1:559--574, 1995.

\bibitem{Westhof:92a}
Westhof E. and Jaeger L.
\newblock {RNA} pseudoknots.
\newblock {\em Current Opinion Struct. Biol.}, 2:327--333, 1992.

\bibitem{Stadler:99a}
{Haslinger C. and Stadler P.F.}
\newblock {{RNA} Structures with Pseudo-Knots}.
\newblock {\em {Bull.Math.Biol.}}, {61}:{437--467}, 1999.

\bibitem{Schuster:98}
{Hofacker I.L., Schuster P., Stadler P.F.}
\newblock {Combinatorics of {RNA} Secondary Structures}.
\newblock {\em {Discr. Appl. Math.}}, {88}:{207--237}, 1998.

\bibitem{Gessel:92a}
Gessel I.M. and Zeilberger D.
\newblock Random walk in a {Weyl} chamber.
\newblock {\em Proc. Amer. Math. Soc.}, 115:27--31, 1992.

\bibitem{Grabiner:93a}
Grabiner.~D. J. and Magyar. P.
\newblock Random walks in {Weyl} chambers and the decomposition of tensor
  powers.
\newblock {\em J. Alg. Combin.}, 2:239--260, 1993.

\bibitem{Waterman:80}
Howell J.A., Smith T.F., and Waterman M.S.
\newblock Computation of generating functions for biological molecules.
\newblock {\em SIAM J. Appl. Math.}, 39:119--133, 1980.

\bibitem{McCaskill:90a}
McCaskill J.S.
\newblock The equilibrium partition function and base pair binding
  probabilities for {RNA} secondary structure.
\newblock {\em Biopolymers}, 29:1105--1119, 1990.

\bibitem{Tacker:94a}
Tacker M., Fontana W., Stadler P.F., and Schuster P.
\newblock Statistics of {RNA} melting kinetics.
\newblock {\em Eur. Biophysics J.}, 23:29--38, 1994.

\bibitem{Zuker:79b}
Zuker M. and Sankoff D.
\newblock {RNA} secondary structures and their prediction.
\newblock {\em Bull. Math. Bio.}, 46(4):591--621, 1984.

\bibitem{Waterman:78a}
Waterman M.S.
\newblock Secondary structure of single - stranded nucleic acids.
\newblock {\em Adv. Math.I (suppl.)}, 1:167--212, 1978.

\bibitem{Waterman:79a}
Waterman M.S.
\newblock Combinatorics of {RNA} hairpins and cloverleafs.
\newblock {\em Stud. Appl. Math.}, 60:91--96, 1979.

\bibitem{Waterman:86}
Waterman M.S. and Smith T.F.
\newblock Rapid dynamic programming algorithms for {RNA} secondary structure.
\newblock {\em Adv. Appl. Math.}, 7:455--464, 1986.

\bibitem{Penner:93c}
{Penner R. C. and Waterman M. S.}
\newblock {Spaces of RNA secondary structures}.
\newblock {\em {Adv. Math.}}, {101}:{31--49}, 1993.

\bibitem{Zeilberger:96a}
Wilf.~H.S Petkovsek~M. and Zeilberger. D.
\newblock {\em $A=B$}.
\newblock A.K. Peters Ltd., Wellesly, MA., 1996.

\bibitem{Lyngso:00a}
Lyngso R. and Pedersen C.
\newblock Pseudoknots in {RNA} secondary structures.
\newblock In H.Flyvbjerg, J.Hertz, M.H. Jensen, O.G. Mouritsen, and K.~Sneppen,
  editors, {\em Physics of Biological Systems: From Molecules to Species},
  Berlin, Heidelberg, New York, 1996. Springer.

\bibitem{Rivas:99a}
{Rivas E. and Eddy S.}
\newblock {A Dynamic Programming Algorithm for RNA structure prediction
  inclusing pseudoknots}.
\newblock {\em {J. Mol. Biol.}}, {285}:{2053--2068}, 1999.

\bibitem{Sundaram:90a}
Sundaram S.
\newblock The {Cauchy Identity} for {Sp(2n)}.
\newblock {\em J. Combinatorial Theory (A)}, 53:209--238, 1990.

\bibitem{Akutsu:00a}
Akutsu T.
\newblock Dynamic programming algorithms for {RNA} secondary structure
  prediction with pseudoknots.
\newblock {\em Discrete Appl. Math.}, 104:45--62, 2000.

\bibitem{Bauer:96}
{Tacker M. and Stadler P.F. and Bauer E.G. and Hofacker I.L. and Schuster P.}
\newblock {Algorithm Independent Properties of {RNA} Secondary Structure
  Predictions}.
\newblock {\em {Eur.Biophy.J.}}, {25}:{115--130}, 1996.

\bibitem{Uemura:99a}
Hasegawa~A. Uemura~Y., Kobayashi S., and Yokomori T.
\newblock Tree adjoining grammars for {RNA} structure prediction.
\newblock {\em Theoret. Comput. Sci.}, 210:277--303, 1999.

\bibitem{Waterman:94a}
Schmitt W.R. and Waterman M.S.
\newblock Linear trees and {RNA} secondary structure.
\newblock {\em Discr. Appl. Math.}, 51:317--323, 1994.

\bibitem{Chen:07a}
Chen W.Y.C., Deng E.Y.P., Du~R.R.X., Stanley R.P., and Yan C.H.
\newblock Crossings and nestings of matchings and partitions.
\newblock {\em Trans. Amer. Math. Soc.}, 359:1555--1575, 2007.

\end{thebibliography}
\bibliographystyle{plain}


\end{document}